\documentclass{amsart}
\usepackage[T1]{fontenc}
\usepackage{amsmath, amsthm, amssymb, mathrsfs, mathtools, xcolor}
\usepackage{enumerate, enumitem, hyperref}
\usepackage{resmes}
\usepackage{esint}
\usepackage{comment}
\usepackage{braket}
\numberwithin{equation}{section}

%% Theorem Environments
\theoremstyle{plain}
\newtheorem{theorem}{Theorem}[section]
\newtheorem{lemma}[theorem]{Lemma}
\newtheorem{corollary}[theorem]{Corollary}
\newtheorem{proposition}[theorem]{Proposition}

\newtheorem{maintheorem}{Theorem}

\theoremstyle{definition}
\newtheorem{definition}[theorem]{Definition}
\newtheorem{remark}[theorem]{Remark}

\numberwithin{equation}{section}

%% Letters
\newcommand{\scS}{\mathscr{S}}
\newcommand{\scD}{\mathscr{D}}

\newcommand{\scH}{\mathscr{H}}

\newcommand{\scQ}{\mathscr{Q}}

\newcommand{\scB}{\mathscr{B}}
\newcommand{\scG}{\mathscr{G}}

\newcommand{\scL}{\mathscr{L}}

\newcommand{\scJ}{\mathscr{J}}
\newcommand{\scE}{\mathscr{E}}
\newcommand{\scY}{\mathscr{Y}}

\newcommand{\cB}{\mathcal{B}}
\newcommand{\cN}{\mathcal{N}}
\newcommand{\cD}{\mathcal{D}}

\newcommand{\R}{\mathbb{R}}
\newcommand{\Z}{\mathbb{Z}}
\newcommand{\N}{\mathbb{N}}

%% Text Commands
\newcommand{\GH}{\mathrm{GH}}
\newcommand{\Co}{\mathop\mathrm{Co}}
\newcommand{\WC}{\mathrm{WC}}
\newcommand{\BL}{\mathrm{BiLip}}
\newcommand{\VBP}{\mathrm{VBP}}
\newcommand{\Lip}{\mathrm{Lip}}
\newcommand{\osc}{\mathrm{Osc}}

\newcommand{\diam}{\mathrm{diam}}
\newcommand{\dist}{\mathrm{dist}}

\newcommand{\spt}{\mathrm{spt}}
\newcommand{\md}{\mathrm{md}}
\newcommand{\td}[1]{\tilde{#1}}
\newcommand{\qn}{\|\cdot\|_Q}

\newcommand{\lebb}{\mathscr{L}^n}
\newcommand{\cd}{\text{cd}}

%% Footer
\AtEndDocument{\bigskip{\footnotesize%
  \textsc{Simons Laufer Mathematical Sciences Institute, Berkeley, CA 94703} \par  
  \textit{E-mail address}: \texttt{jkrandel@ucdavis.edu}\\
  %2010 \textit{Mathematics Subject Classification}. 28A75, 28A78, 28A12\\
  Jared Krandel was partially supported by the National Science Foundation under Grants No. DMS-1763973. This material is also based upon work supported by the National Science Foundation under Grant No. DMS-1928930 while the author was in residence at the Simons Laufer Mathematical Sciences Institute (formerly MSRI) in Berkeley, California, during the Fall 2024 semester.
}}

%% Title page
\title[The WCD and alpha numbers]{Weak Carleson conditions in uniformly rectifiable metric spaces: the WCD and alpha numbers}
\author{Jared Krandel}
\date{}

\begin{document}
\begin{abstract}
We investigate characterizations of uniformly rectifiable (UR) metric spaces by so-called weak Carleson conditions for flatness coefficients which measure the extent to which Hausdorff measure on the metric space differs from Hausdorff measure on a normed space. First, we show that UR metric spaces satisfy David and Semmes's weak constant density condition, a quantitative regularity property which implies \textit{most} balls in the space support a measure with \textit{nearly constant} density in a neighborhood of scales and locations. Second, we introduce a metric space variant of Tolsa's alpha numbers that measure a local normalized $L_1$ mass transport cost between the space's Hausdorff measure and Hausdorff measure on a normed space. We show that a weak Carleson condition for these alpha numbers gives a characterization of metric uniform rectifiability. We derive both results as corollaries of a more general abstract result which gives a tool for transferring weak Carleson conditions to spaces with very big pieces of spaces with a given weak Carleson condition.
\end{abstract}

\maketitle
\tableofcontents

\section{Introduction}
\subsection{Overview}
Rectifiability properties of sets and measures are fundamental topics of interest in geometric measure theory. Although these properties were initially studied for subsets of Euclidean space, in recent years there have been significant developments in rectifiability properties of metric spaces.
\begin{definition}[$n$-rectifiability]\label{def:rect}
    Let $X$ be a metric space. We say that $E\subseteq X$ is $n$-rectifiable if there exist Borel sets $A_i\subseteq \R^n$ and Lipschitz maps $f_i:A_i\rightarrow X$ such that
    \begin{equation*}
        \scH^n\left(E\setminus\bigcup_if_i(A_i)\right) = 0.
    \end{equation*}
\end{definition} 
The class of \textit{uniformly $n$-rectifiable} (UR) metric spaces give a strictly stronger notion of rectifiability which is \textit{quantitative} in nature.
\begin{definition}[uniform $n$-rectifiability]\label{def:ur}
    A metric space $X$ is \textit{uniformly $n$-rectifiable} if $X$ has \textit{Big Pieces of Lipschitz images of $\R^n$} (BPLI) and there exists a constant $C_0 > 0$ such that $X$ is \textit{Ahlfors $(C_0,n)$-regular}. By having BPLI, we mean there exist constants $L, \theta > 0$ such that for all $x\in X$ and $0 < r < \diam(X)$, there exists an $L$-Lipschitz map $f:A_{x,r}\subseteq B(0,r)\subseteq\R^n\rightarrow X$ such that 
    \begin{equation}
        \scH^n(B(x,r)\cap f(A_{x,r})) \geq \theta r^n.
    \end{equation}
    By being Ahlfors $(C_0,n)$-regular, we mean that for all $x\in X$ and $0 < r < \diam(X)$,
    \begin{equation}
        C_0^{-1}r^n \leq \scH^n(B(x,r)) \leq C_0r^n.
    \end{equation}
\end{definition}
This is a stronger form of $n$-rectifiability in which one enforces Ahlfors $n$-regularity and requires a uniform percentage of the measure of each ball to be covered by a single Lipschitz image. In the case when $X$ is a subset of some Euclidean space with the induced metric, this class of sets were introduced and studied in detail by David and Semmes \cite{DS91}, \cite{DS93}. In their work, David and Semmes provide numerous analytic and geometric characterizations of these sets involving boundedness of singular integral operators, quantitative control over numerous coefficients measuring local non-flatness, quantitative approximation by Lipschitz graphs, and more.  

It has long been an open question whether there exist analogous characterizations of UR metric spaces similar to those studied by David and Semmes in Euclidean spaces. Recent work by Bate, Hyde, and Schul has given significant progress, giving metric versions of the well-known BWGL and corona decomposition \cite{BHS23}. Fassler and Violo have also managed to give a version of the so-called strong geometric lemma characterization of UR for one-dimensional sets \cite{FV23}.

In this paper, we give some progress on characterizations of UR metric spaces by showing (1) UR metric spaces satisfy the \textit{weak constant density condition} of David and Semmes, and (2) metric uniform rectifiability is characterized by a \textit{weak Carleson condition} for a metric variant of Tolsa's $\alpha$ number. We derive both results as an application of a more general, abstract tool for transferring weak Carleson conditions from coefficients on spaces in a given class $\scY$ to spaces with \textit{very big pieces} of spaces in $\scY$. In our applications, we apply this result by first showing that \textit{subsets} of bi-Lipschitz images of $\R^n$ have well-controlled densities and alpha numbers. Then, we use the fact that UR metric spaces have very big pieces of bi-Lipschitz images to apply the abstract transference result

\subsection{The weak constant density condition}
One of many interesting characterizations of $n$-rectifiability in Euclidean spaces involves the Hausdorff density, which measures how the $\scH^n$ measure of small balls around a point compare to the $\scH^n$ measure of balls of equal radius in $\R^n$.
 \begin{theorem}\label{t:euc-dens}
        Let $E\subseteq \R^d$ be $\scH^n$ measurable with $\scH^n(E) < \infty$. The set $E$ is $n$-rectifiable if and only if for $\scH^n$-a.e. $x\in E$,
        \begin{equation}\label{e:rect-density}
            \lim_{r\rightarrow0}\frac{\scH^n(E\cap B(x,r))}{(2r)^n} = 1.
        \end{equation}
    \end{theorem}
The backward direction was proven by Besicovitch for $n=1,\ d=2$ \cite{Be28},\cite{Be38},  Marstrand for $n=2,\ d=3$ \cite{Mar61} and Mattila for general $n\leq d$ \cite{Ma75}. Later, Preiss showed that any measure in $\R^d$ whose $n$-dimensional density merely exists and is positive and finite $\scH^n$-a.e. is $n$-rectifiable, generalizing this result significantly \cite{Pr87}. 

The forward direction of Theorem \ref{t:euc-dens} follows more readily from the almost everywhere differentiability of Lipschitz maps, but it was not until Kircheim gave a notion of metric differentiability for maps from $\R^n$ into $X$ that Theorem \ref{t:euc-dens} received the following one-sided metric space analog.
\begin{theorem}[See \cite{Ki94} Theorem 9]\label{t:met-dens}
    Let $E\subseteq X$ be $n$-rectifiable with $\scH^n(E)<\infty$. Equation \eqref{e:rect-density} holds at $\scH^n$-a.e. $x\in E$.
\end{theorem}
It follows from work of Preiss and Ti\v{s}er \cite{PT92} that the converse of Theorem \ref{t:met-dens} holds when $n=1$, but it remains an interesting and difficult open question whether the converse holds for general $n$.

The weak constant density condition (WCD) provides an analog of \eqref{e:rect-density} in the world of uniform rectifiability. David and Semmes introduced the WCD as a way of quantifying \eqref{e:rect-density} by requiring that in almost every \textit{ball}, there exists a measure supported on the set with \textit{nearly constant} density nearby.
\begin{definition}[weak constant density condition, Carleson sets and measures]\label{def:wcd}
    Let $X$ be  Ahlfors $n$-regular, let $C_0,\epsilon_0 > 0$, and define
    \begin{align}\label{e:wcd-good}
        \scG_\cd(\epsilon_0) &= \Set{ (x,r)\in X\times\R^+ | \begin{array}{l} 
            \exists \mu \text{ a $C_0$-regular measure with }\spt\mu = X\\
            \text{such that }\forall y\in B(x,r),\ 0 < t \leq r,\\
            |\mu(B(y,t)) - t^n| \leq \epsilon_0 r^n 
        \end{array}},\\ \label{e:wcd-bad}
        \scB_\cd(\epsilon_0) &= X\times\R^+\setminus \scG_\cd(\epsilon_0).
    \end{align}
    We say that $X$ satisfies the weak constant density condition if for all $\epsilon_0 > 0$, $\scB_\cd(\epsilon_0)$ is a Carleson set. That is, there exists a constant $C > 0$ such that for all $z\in X$ and $0 < r < \diam(X)$,
    \begin{equation*}
        \int_{B(z,r)}\int_0^r\chi_{\scB_\cd(\epsilon_0)}(x,t)d\scH^n(x)\frac{dt}{t} \leq Cr^n.
    \end{equation*}
    If this holds, we say that $\chi_{\scB_\cd(\epsilon_0)}d\scH^n(x)\frac{dt}{t}$ is a \textit{Carleson measure} and say that $\scB_\cd(\epsilon_0)$ is \textit{$C$-Carleson}.
\end{definition}
For related quantitative conditions involving densities, see \cite{CGLT16}, \cite{AH22}, and \cite{TT15}. The work of David, Semmes, and Tolsa combine to prove the following Theorem:
\begin{theorem}\label{t:euc-wcd}
    Let $E\subseteq\R^d$ be Ahlfors $n$-regular. The set $E$ is uniformly $n$-rectifiable if and only if $E$ satisfies the WCD.
\end{theorem}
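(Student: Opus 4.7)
The theorem packages two implications, and I would treat them separately. For the easier direction WCD $\Rightarrow$ UR, the plan is to use the nearly constant density guaranteed on good pairs $(x,r)\in\scG_\cd$ to produce plane approximations for $E$. Given such a pair with associated Ahlfors regular measure $\mu$, the condition $|\mu(B(y,t))-t^n|\leq \epsilon_0 r^n$ for all $y\in B(x,r)$ and $0<t\leq r$ says that $\mu$ is quantitatively close to an $n$-uniform measure in $B(x,r)$. A quantitative Preiss-type compactness argument then shows that any such $\mu$ is $L^\infty$-close, on compact subsets, to $\scH^n$ restricted to an affine $n$-plane. Transferring the conclusion from $\mu$ to $E$ via $\spt(\mu)=E$ and combining with the Carleson property of $\scB_\cd(C_0,\epsilon_0)$ yields the bilateral weak geometric lemma (BWGL), which is one of the David--Semmes characterizations of uniform rectifiability.

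For the harder direction UR $\Rightarrow$ WCD, I would use the corona decomposition: on a UR set, for any $\eta>0$, the family of dyadic cubes $Q\subseteq E$ decomposes into a Carleson collection of stopping cubes together with coherent regimes on which $E$ is $\eta$-close to a Lipschitz graph $\Gamma$ of slope at most $\eta$. On such a graph, the pushforward of $\lebb$ from the base $n$-plane via the graph map already has constant density; pushing this forward onto $E$ through the nearest point projection (valid where $E$ lies within $\eta r$ of $\Gamma$) produces a candidate measure $\mu$ with $\spt(\mu)=E$ locally. The uniform smallness of $\eta$ on a coherent regime forces $\mu(B(y,t))=t^n+O(\eta)r^n$ for all $y\in B(x,r)$ and $t\leq r$, so for $\eta\ll\epsilon_0$ the pair $(x,r)$ lies in $\scG_\cd$. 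The failure set is then contained in a union of balls associated to the stopping cubes of the corona decomposition, whose Carleson packing gives the desired Carleson bound on $\scB_\cd(C_0,\epsilon_0)$.

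The main obstacle is the UR $\Rightarrow$ WCD direction, because the WCD demands a single measure supported on all of $E$ with nearly constant density simultaneously at every location and every scale in $B(x,r)$. The natural surface measure $\scH^n\lfloor E$ has density given by a Jacobian and so is generally \emph{not} nearly constant on a Lipschitz graph of nontrivial slope, even a small one. Producing a competitor measure with $\spt(\mu)=E$ which has uniformly constant density requires gluing the locally pushed-forward Lebesgue measures across neighboring coherent regimes and across scales, while simultaneously preserving Ahlfors regularity with a uniform constant $C_0$ and keeping the density deficit below $\epsilon_0 r^n$. Controlling the gluing error on both fronts at once is the delicate point and is the reason this direction historically required Tolsa's refinements of the David--Semmes corona technology beyond the arguments available in \cite{DS91,DS93}.
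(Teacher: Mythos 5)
The paper does not prove Theorem \ref{t:euc-wcd}; it cites it as a combination of \cite{DS91} (for UR $\Rightarrow$ WCD) and \cite{To15} (for WCD $\Rightarrow$ UR, completing the partial cases $n=1,2,d-1$ from \cite{DS91}). Your sketch has the history---and, more importantly, the difficulty assessment---reversed. You label WCD $\Rightarrow$ UR the ``easier'' direction, but this is precisely the direction that remained open for intermediate dimensions until Tolsa's work, and you attribute Tolsa's contribution to the wrong implication. UR $\Rightarrow$ WCD, which you call harder, was done in full generality by David and Semmes in \cite{DS91} (Chapter 6), using a singular-integral characterization (their condition C2) in which one detects that the pushforward of $\scH^n|_E$ onto a good $n$-plane is nearly symmetric; no corona gluing of competitor measures is required.

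There is also a genuine gap inside your WCD $\Rightarrow$ UR sketch. A compactness argument from near-constant density only shows $\mu$ is close, on a good ball, to some $n$-\emph{uniform} measure---not to $\scH^n$ on an affine $n$-plane. Uniform measures need not be flat (the Kowalski--Preiss light cone in $\R^4$ is the basic example), so the claim that ``any such $\mu$ is $L^\infty$-close, on compact subsets, to $\scH^n$ restricted to an affine $n$-plane'' is false as stated and is exactly the obstruction David and Semmes faced. One has to pass to Preiss's theorem that tangent measures of uniform measures at infinity are flat and then ``transport'' flatness down to bounded scales; Tolsa carried this out with additional Riesz transform arguments. Without that input your bridge to the BWGL does not close.

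Finally, your observation that on a Lipschitz graph the density of $\scH^n|_E$ is a Jacobian and hence non-constant is a correct and important one, but your proposed remedy (gluing pushforwards of $\lebb$ across corona regimes) is not how the issue is resolved, and you do not actually resolve the gluing difficulty you raise. In the present paper (in the more general setting of Theorem \ref{t:gen-wcd}, which recovers UR $\Rightarrow$ WCD over $\R^d$) the candidate measure is simply a multiple of $\scH^n$, and the Jacobian obstruction is handled by \emph{quantitatively controlling the oscillation of the Jacobian itself}: Lemma \ref{l:wavelet-weak-flat-L2} shows that if a certain wavelet-square-function quantity $\Delta_k^{\scJ_f}(I_Q)$ is small, then averages of $\scJ_f$ over normed balls inside $I_Q$ agree up to a multiplicative $(1\pm\epsilon)$ factor, and Remark \ref{r:delta-props} shows the exceptional cubes where this fails pack as a Carleson set. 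Combined with the area formula, quantitative differentiation (Theorem \ref{t:qdiff}), and, for the general UR case, very big pieces of bi-Lipschitz images together with the stability lemma (Lemma \ref{l:stability}), this yields the WCD directly without constructing any new constant-density measure. So the problem you identify is real, but the proof avoids it rather than solving it by gluing.
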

David and Semmes proved the forward implication in Chapter 6 of \cite{DS91} using a characterization of uniform rectifiability (condition C2 of \cite{DS91}) more closely related to the boundedness of singular integral operators. We will say more about this when we discuss our result.

They proved the reverse implication only in the case $n = 1,2,$ and $d-1$. Their proof uses the fact that if a measure is very close to having constant density in a large neighborhood of scales and locations, then its support is well-approximated by the support of an \textit{$n$-uniform measure}, a measure $\mu$ for which there exists $c>0$ such that $\mu(B(x,r)) = cr^n$ for all $x\in\spt(\mu)$ and $r > 0$. Because uniform measures in Euclidean space are completely classfied for $n=1,2$ (they are all multiples of Hausdorff measure on a plane) and for $n=d-1$ (they are Hausdorff measure on products of planes and light cones \cite{KP87}), David and Semmes are able to show that a WCD set is very close to flat on most balls which are good for the WCD. The absence of a classification for uniform measures in intermediate dimensions prevented a direct adaptation of their arguments. However, Tolsa completed the proof of the reverse direction in Theorem \ref{t:euc-wcd} in \cite{To15} by replacing elements of David and Semmes's argument specific to their examples of uniform measures with general flatness properties of uniform measures derived by Preiss \cite{Pr87} in addition to new arguments using the Riesz transform.

In general, classifying uniform measures is a difficult open problem, but see \cite{Ni22} for an interesting family of examples. For further studies of uniform measures in Euclidean spaces see \cite{Pr87} (and \cite{DeLe08} for a more gentle presentation of Preiss), \cite{KP02}, \cite{Ni17}, and \cite{Ni19}. For research into uniform measures in the Heisenberg group see \cite{CMT20} and \cite{Me22} and for a related result in $\ell_\infty^3$, see \cite{Lo03}.

Just as Theorem \ref{t:euc-wcd} provides a quantitative analog of Theorem \ref{t:euc-dens}, one might expect a quantitative analog of Kircheim's result, Theorem \ref{t:met-dens}, to hold for \textit{uniformly rectifiable metric spaces}, i.e., metric spaces which are Ahlfors $n$-regular and have big pieces of Lipschitz images of subsets of $\R^n$. In this paper, we provide such an analog by proving the following theorem.
\begin{maintheorem}\label{t:gen-wcd}
    Uniformly $n$-rectifiable metric spaces satisfy the WCD.
\end{maintheorem}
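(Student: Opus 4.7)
The plan is to adapt the strategy used by David and Semmes in Chapter 6 of \cite{DS91} to prove the forward direction of Theorem~\ref{t:euc-wcd} in Euclidean space. The key analytic input there is a Carleson-type bound on the oscillation of local averages of $L^2$ functions on UR sets, and the abstract signals that a strengthened metric version of that bound is available. Accordingly, my first step would be to formulate and prove such an oscillation theorem for UR metric spaces: roughly, that for every $f \in L^2(E, \scH^n)$ the set of pairs $(x,t)$ on which the mean of $f$ over $B(x,t) \cap E$ differs significantly from its mean over a comparable parent ball is controlled by a Carleson measure depending only on $\|f\|_{L^2}$ and the UR data of $E$. This step must be carried out intrinsically, replacing the Euclidean ingredients in the original argument (affine approximations, projections, singular integral bounds from condition C2) with counterparts that depend only on AR and BPLI.

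Assuming the oscillation theorem, WCD should follow by applying it to density-like functions. For each dyadic scale $t_k = 2^{-k}\diam(E)$, define $\rho_k(y) = \scH^n(E \cap B(y,t_k))/t_k^n$; Ahlfors regularity bounds $\rho_k$ uniformly from above and below. Applied to localized versions of the $\rho_k$, the oscillation theorem yields a Carleson packing estimate on the set of balls $B(x,r)$ over which $\rho_k$ varies by more than $\epsilon_0$ for some $t_k \leq r$. Summing dyadically in $k$, the union of such bad balls remains Carleson. On each complementary good ball the density $\scH^n(E \cap B(y,t))/t^n$ is nearly constant throughout the cylinder $B(x,r) \times (0,r]$, so one can take $\mu = c(x,r)\scH^n|_E$ with $c(x,r)$ chosen so that $\mu(B(x,r)) = r^n$; this $\mu$ is Ahlfors regular with a uniform constant because $c(x,r)$ is pinched between the AR constants of $\scH^n$, and by construction $(x,r) \in \scG_{\cd}(C_0, \epsilon_0')$ for a suitable $\epsilon_0'$ comparable to $\epsilon_0$.

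The principal obstacle will be establishing the metric oscillation theorem, since the original proof in \cite{DS91} leans on the C2 characterization of UR via singular integral operators, which does not translate cleanly to the metric setting. A secondary technical point is that WCD requires a single measure $\mu$ to approximate $t^n$ for \emph{all} $(y,t) \in B(x,r) \times (0,r]$ simultaneously; passing from the scale-by-scale oscillation estimates to this uniform-in-$(y,t)$ statement should be handled by a stopping-time argument on the Christ-type dyadic cubes of $E$, tracking the earliest scale at which the density begins to drift off of $c(x,r)^{-1}$ and absorbing the resulting stopping cubes into the Carleson constant. I would expect the cleanest implementation to be dyadic throughout, with the final passage to continuous $(y,t)$ handled by a standard containment between metric balls and their enclosing Christ cubes.
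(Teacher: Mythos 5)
Your proposal diverges substantially from the paper, and the two places where you acknowledge difficulty are exactly where the argument breaks down without a fundamentally different strategy.

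First, the ``metric oscillation theorem'' you posit for $L^2(E,\scH^n)$ on a general UR metric space $E$ is not something the paper proves, and it is not clear how one would prove it: David and Semmes's Euclidean oscillation estimates in \cite{DS91} rest on the singular-integral/C2 characterization of UR, which has no metric counterpart. The paper sidesteps this entirely. It proves an oscillation lemma (Lemma \ref{l:wavelet-weak-flat-L2}) for $L^2(\R^n)$ functions using classical Haar wavelets and a compactness argument, then transfers the estimate to the metric space by parametrizing with a bi-Lipschitz map $f:[0,1]^n\to\Sigma$ and applying the oscillation lemma to the Kircheim Jacobian $\scJ_f$ of the metric derivative. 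The area formula converts statements about averages of $\scJ_f$ over Euclidean normed balls into statements about $\scH^n$-measures of metric balls in $\Sigma$, and Azzam--Schul quantitative differentiation controls how far $f$ is from being a normed map at most scales. The passage from bi-Lipschitz images to general UR spaces is then done not by proving anything new on $E$ itself, but by the very-big-pieces theorem of Bate--Hyde--Schul and a John--Nirenberg--Str\"omberg stability lemma (Lemma \ref{l:stability}), which upgrades the statement ``a positive proportion of points in each $R$ see only boundedly many bad cubes'' to a full Carleson packing condition. Your outline never mentions big pieces or a stability step, yet this is what makes the extension from a parametrizable piece to the whole space possible.

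Second, applying the oscillation theorem to the scale-indexed densities $\rho_k(y)=\scH^n(E\cap B(y,t_k))/t_k^n$ and then ``summing dyadically in $k$'' does not yield a Carleson set. Each $\rho_k$ would produce its own Carleson constant $C$ independent of $k$, and for a fixed window $B(z,r_0)\times(0,r_0]$ the number of scales $k$ with $t_k\leq r_0$ is unbounded, so the summed bound diverges. The paper's use of a \emph{single} $L^2$ function $\scJ_f$, whose Haar wavelet coefficients pack into a single scale-invariant Carleson estimate \eqref{e:wavelet-carleson}, is precisely what couples all the scales together and avoids this divergence. There is also a mismatch between what your oscillation theorem would control (differences of \emph{averages} of $\rho_k$ over balls) and what WCD requires (the pointwise values $\rho_k(y)$ uniformly in $y$ and in the scale); your proposed stopping-time patch is not developed, whereas in the paper this issue simply does not arise because $\scH^n(f(B))=\int_B \scJ_f$ by the area formula, so controlling the average of $\scJ_f$ \emph{is} controlling the measure of the ball.
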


We note here that the naive converse of Theorem \ref{t:euc-wcd} is false: There exist Ahlfors regular metric spaces which satisfy the WCD, yet are not uniformly rectifiable. Indeed, the metric space $(X,d) = (\R,|\cdot|_{\text{Euc}}^{1/2})$ is in fact $2$-uniform: $\scH^2(B(x,r)) = 2r^2$ for all $x\in \R$ and $r \geq 0$, hence $X$ satisfies the weak constant density condition, yet $X$ is purely $2$-unrectifiable (notice that the Hausdorff $2$-density is everywhere 1/2 so that this space does not give a counterexample to the potential converse of Theorem \ref{t:met-dens}). Some different examples of this failure even in the case $n=1$ are given by Bate \cite{Ba23}. He proves that every $1$-uniform metric measure space is either $\R$, a particular union of disjoint circles of radius $d$, or a purely unrectifiable ``limit'' of the circle spaces. These last two spaces are examples of $1$-uniform spaces which are not uniformly rectifiable.

Analyzing connectedness plays a special role in the proof because any $1$-uniform connected component must be locally isometric to $\R$, implying any connected $1$-uniform space is itself $\R$. From these examples, it seems reasonable to think that some connectedness and topological conditions are necessary hypotheses for any type of converse to hold. It also follows from work of Schul \cite{Sc07}, \cite{Sc09}, and Fassler and Violo \cite{FV23} (see also \cite{Ha05}) that any Ahlfors $1$-regular connected subset of a metric space is uniformly $1$-rectifiable, although perhaps adding some form of weaker hypothesis could provide an interesting converse to our result in the one-dimensional case using Bate's classification.

\subsection{Alpha numbers}
The \textit{alpha numbers} are a family of coefficients related to optimal mass transport which were introduced by Tolsa who applied them to problems related to $L^2$ boundedness of operators such as the Riesz transform \cite{To09}. Given a closed ball $B\subseteq\R^d$ and any two finite Borel measures $\sigma,\nu$ in $\R^d$, we define
\begin{equation*}
    \dist_B(\sigma,\nu)\vcentcolon= \sup\left\{ \left|\int fd\sigma - \int f d\nu\right|:\spt(f)\subseteq B,\ \Lip(f)\leq 1 \right\}.
\end{equation*}
Using this distance, Tolsa defined $\alpha$ as follows:
\begin{definition}
    Given an Ahlfors $n$-regular measure $\mu$ and a closed ball $B(x,r)\subseteq\R^d$, define
    \begin{equation}\label{e:euc-alpha}
        \alpha_\mu(B)\vcentcolon= \frac{1}{\ell(Q)^{n+1}}\inf_{c \geq 0, L}\dist_B(\mu,c\scH^n\resmes L)
    \end{equation}
    where the infimum is taken over all $n$-planes $L$ in $\R^d$.
\end{definition}
Tolsa was able to show that a strong Carleson condition for these numbers characterizes uniformly $n$-rectifiable measures in Euclidean space.
\begin{theorem}[\cite{To09} Theorem 1.2]\label{t:tolsa-alph}
    Let $\mu$ be an Ahlfors $n$-regular measure in $\R^d$. Then $\mu$ is uniformly $n$-rectifiable if and only if $\alpha_\mu(x,r)d\mu(x)\frac{dr}{r}$ is a Carleson measure.
\end{theorem}
This number is also closely related to optimal mass transport; through Kantorovich duality, $\dist_B(\sigma,\nu)$ defined above is closely related to $W_1(\sigma,\nu)$, the Wasserstein $1$-distance of $\sigma$ and $\nu$. In \cite{To12}, Tolsa generalizes the above notion of alpha number by defining numbers $\alpha_p$ for any $1 \leq p < \infty$ using a localized Wasserstein $p$-distance in place of $\dist_B$. In fact, it is even true that $\alpha_p$ characterizes Euclidean uniform rectifiability for $1 \leq p \leq 2$ (See \cite{To12} Theorem 1.2).

Studying quantitative rectifiability via alpha numbers has also led to important results in qualitative rectifiability \--- various alpha numbers have been used to give characterizations of $n$-rectifiable measures in Euclidean space. See \cite{ATT20} for a characterization of pointwise doubling $n$-rectifiable measures in terms of a version of $\alpha$, and see \cite{Dam20},\cite{Dam21} for a characterization in terms of a version of $\alpha_2$. 

In this paper, we define a metric variant of $\alpha$ in Definition \ref{def:alpha} and show that a weak Carleson condition (See Definition \ref{def:wcc}) for the coefficient (See Definition \ref{def:co} $\alpha$ characterizes uniformly $n$-rectifiable metric spaces
\begin{maintheorem}\label{t:metric-UR-alpha}
    Let $X$ be an Ahlfors $n$-regular metric space. Then $X$ is uniformly $n$-rectifiable if and only if $\alpha_X\in\Co(X)$ has a weak Carleson condition on $X$.
\end{maintheorem}
It is an interesting open question whether a strong Carleson condition for $\alpha$ as in \ref{t:tolsa-alph} also characterizes metric uniform rectifiability.

\subsection{Outline of the paper}
In section \ref{sec:transfer}, we give a broad definition of a coefficient (such as $\alpha$) and prove Theorem \ref{t:wgl-suff-cond}, an abstract result which gives conditions under which one can ``transfer'' a weak Carleson condition under very big pieces. This result is used in both the proofs of Theorems \ref{t:gen-wcd} and \ref{t:metric-UR-alpha}.

In section \ref{sec:wavelets}, we develop more tools used in the proofs of Theorems \ref{t:gen-wcd} and \ref{t:metric-UR-alpha}. The main results are Lemma \ref{l:wavelet-weak-flat-L2-sand} and Corollary \ref{c:oscillation-carleson} which allow us to quantitatively control the variation of means of $L^\infty$ functions on $\R^n$ over a general class of subsets of the domain which we call sandwichable (the motivating example of such a class is $L$-bi-Lipschitz images of a fixed ball). Corollary \ref{c:oscillation-carleson} is similar to results given by David and Semmes (See \cite{DS93} Lemma IV.2.2.14 and Corollary IV.2.2.19, and see Remark \ref{r:cubes-vs-balls} for a discussion of the difference with our result), although our proof proceeds by contradiction, a method which differs significantly from their proofs. These results are applied to control the variation of the Jacobian of bi-Lipschitz mappings $g:\R^n\rightarrow\Sigma$ over normed balls in its domain, controlling the variation of the $\scH^n$ measure of balls in $\Sigma$. 

In section \ref{sec:app}, we prove Theorems \ref{t:gen-wcd} and \ref{t:metric-UR-alpha}. The structure of both proofs are similar. For the proofs of the Carleson conditions in both theorems, we apply our transference theorem \ref{t:wgl-suff-cond} using Bate, Hyde, and Schul's result that UR spaces have very big pieces of bi-Lipschitz images. The main difficulty is showing that suitable versions of our coefficients defined on \textit{subsets} of bi-Lipschitz images have the necessary Carleson conditions (See Proposition \ref{p:wcd-Sigma} and Proposition \ref{p:bilip-alpha-wcc}.) For the WCD, this method of proof differs significantly from the original Euclidean proof of David and Semmes which uses a Carleson condition for integrals of smooth odd functions defined on the ambient Euclidean space to show that UR subsets have a form of quantitative symmetry from which one can deduce near uniformity of Hausdorff measure (See section 6 of \cite{DS91}.).

\section{Acknowledgements}
The author would like to thank Raanan Schul for reading and giving comments on an early draft of this paper. The author would also like to thank David Bate for helpful conversations surrounding metric notions of alpha number and for encouraging him to formulate Theorem \ref{t:wgl-suff-cond}.

\section{Preliminaries}\label{sec:prelim}
Whenever we write $A \lesssim B$, we mean that there exists some constant $C$ independent of $A$ and $B$ such that $A \leq CB$. If we write $ A \lesssim_{a,b,c} B$ for some constants $a,b,c$, then we mean that the implicit constant $C$ mentioned above is allowed to depend on $a,b,c$. We will sometimes write $A\asymp_{a,b,c} B$ to mean that both $A\lesssim_{a,b,c} B$ and $B\lesssim_{a,b,c} A$ hold. We use the notation $f:E\twoheadrightarrow F$ to mean $f$ is a surjective map from $E$ to $F$.

Let $(X,d)$ be a metric space. For any subset $F\subseteq X$, integer $n\geq 0$, and constant $0 < \delta \leq \infty$, we define
\begin{equation*}
    \scH^n_\delta(F) = \inf\Set{\sum \diam(E_i)^n : F\subseteq\bigcup E_i,\ \diam(E_i) < \delta}
\end{equation*}
where $\diam(E) = \sup_{x,y\in E}d(x,y)$. The Hausdorff $n$-measure of $F$ is defined as
\begin{equation*}
    \scH^n(F) = \lim_{\delta\rightarrow 0}\scH^n_\delta(F).
\end{equation*}
Occasionally, we will specify a subset $\Sigma\subseteq X$ and write $\scH_\Sigma^n = \scH^n\resmes{\Sigma}$. For any $\scH^n$ measurable $A\subseteq X$ and measurable $f:A\subseteq X\rightarrow\R$, we define
\begin{equation*}
    \fint_A f = \frac{1}{\scH^n(A)}\int_A f(x)d\scH^n(x).
\end{equation*}
We let $\cD(\R^n)$ denote the family of dyadic cubes in $\R^n$. For $Q\in\cD(\R^n)$, we let $\ell(Q)$ denote the side length of $Q$. If $R\in\cD(\R^n)$ and $k\in\Z$, we define
\begin{align*}
    \cD(R) &= \Set{Q\in\cD(\R^n) | Q\subseteq R},\\
    \cD_k(R) &= \Set{Q\in\cD(R) | \ell(Q) = 2^{-k}\ell(R)}.
\end{align*}
We will actually need to extend the standard system of dyadic cubes.
\begin{definition}[one-third trick lattices]\label{def:ot-trick}
    The following family of dyadic systems were introduced by Okikiolu \cite{Ok92}. For any $e\in\{0,1\}^n$ and cube $Q_0\in\cD(\R^n)$, define the shifted dyadic lattice
    \begin{align*}
        \cD_j^e(Q_0) &= \Set{Q + \frac{\ell(Q)}{3}e | Q\in\cD_j(Q_0)},\\
        \cD^e(Q_0) &= \bigcup_{j\geq 0}\cD_j^e(\R^n)
    \end{align*}
    and set 
    \begin{equation*}
        \td{\cD}(Q_0) = \bigcup_{e\in\{0,1\}^n} \cD^e(Q_0).
    \end{equation*}
    $\td{\cD}(Q_0)$ has the following property: For any $x\in Q_0$ and $j\geq 0$, there exists $Q\in\td{\cD}(Q_0)$ such that $x\in \frac{2}{3}Q$ (See \cite{Le03} Proposition 3.2).
\end{definition}
We will also need a version of ``cubes'' associated to a metric space. David \cite{Da88} introduced this idea first, and it was later generalized by \cite{Ch90} and \cite{HM12}. The following formulation draws most from the latter two.
\begin{theorem}[Christ-David cubes]\label{t:cd-cubes}
Let $X$ be a doubling metric space. Let $X_{k}$ be a nested sequence of maximal $\rho^{k}$-nets for $X$ where $\rho<1/1000$ and let $c_{0}=1/500$. For each $k\in\Z$ there is a collection $\mathscr{D}_{k}$ of ``cubes,'' which are Borel subsets of $X$ such that the following hold.
\begin{enumerate}[label=(\roman*)]
    \item $X=\bigcup_{Q\in \mathscr{D}_{k}}Q$.
    \item If $Q,Q'\in \mathscr{D}=\bigcup \mathscr{D}_{k}$ and $Q\cap Q'\neq\emptyset$, then $Q\subseteq Q'$ or $Q'\subseteq Q$.
    \item For $Q\in \mathscr{D}$, let $k(Q)$ be the unique integer so that $Q\in \mathscr{D}_{k}$ and set $\ell(Q)=5\rho^{k(Q)}$. Then there is $x_{Q}\in X_{k}$ so that
    \begin{equation*}
        B(x_{Q},c_{0}\ell(Q) )\subseteq Q\subseteq B(x_{Q},\ell(Q))
    \end{equation*}
    and
    \[ X_{k}=\{x_{Q}: Q\in \mathscr{D}_{k}\}.\]
    \item If $X$ is Ahlfors $n$-regular, then there exists $C \geq 1$ such that 
    \[\scH^n(\Set{x\in Q | d(x,X\setminus Q)\leq\eta\rho^k})\lesssim\eta^{1/C}\ell(Q)^n\] for all $Q\in\scD$ and $\eta > 0$.
\end{enumerate}
\end{theorem}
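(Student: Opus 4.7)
The plan is to follow the standard construction of Christ-David cubes via the nested nets $\{X_k\}$, as in \cite{Da88, Ch90, HM12}. For each $x \in X_{k+1}$, select a parent $p(x) \in X_k$ to be (any) nearest point in $X_k$; by maximality $d(x,p(x)) \leq \rho^k$, so iterating gives a rooted tree structure on $\bigsqcup_k X_k$. For $x \in X_k$, define the cube associated to $x$ by first forming the Voronoi cell of $x$ at scale $k$ and then reassigning points consistently with the parent map at finer scales so that nesting is enforced. Equivalently, one produces cubes as (limits of) unions of fine-scale Voronoi cells at level $j$ whose iterated parents land at $x$; Borel measurability is preserved in the limit with appropriate tie-breaking conventions.

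Properties (i) and (ii) then follow from the construction: the assignment partitions $X$, and the tree structure built in via parents forces nesting. For (iii), the outer inclusion $Q \subseteq B(x_Q, \ell(Q))$ reduces to the telescoping estimate $d(y,x_Q) \leq \sum_{j \geq k(Q)} \rho^j \leq \rho^{k(Q)}/(1-\rho)$, which is safely below $\ell(Q) = 5\rho^{k(Q)}$ since $\rho < 1/1000$. The inner inclusion $B(x_Q,c_0\ell(Q)) \subseteq Q$ exploits the $\rho^k$-separation of $X_k$: any point within $c_0 \ell(Q) = \rho^{k(Q)}/100$ of $x_Q$ is strictly closer to $x_Q$ than to any other element of $X_k$, and the same comparison, carried out scale by scale, prevents it from being reassigned out of the subtree of $x_Q$.

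The main obstacle is (iv), the small boundary property in the Ahlfors regular case. The strategy is to observe that if $y \in Q$ with $d(y, X \setminus Q) \leq \eta \rho^{k(Q)}$, then at some scale $j \in [k(Q), k(Q)+N(\eta)]$ the point $y$ must lie within $C\eta\rho^{k(Q)}$ of the boundary separating two sibling subtrees rooted at scale $j$. For each fixed $j$, one bounds the measure of these ``ambiguous'' points by a thin-annulus estimate near the finite collection of competing net points in $X_j$ intersecting $Q$, using Ahlfors $n$-regularity and a doubling count to obtain a bound of the form $C(\eta\rho^{k(Q)-j})^\alpha \rho^{k(Q)n}$ for some explicit $\alpha>0$. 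Summing the resulting geometric series over $j$ and optimizing the truncation $N(\eta)$ against the trivial bound $\scH^n(Q) \lesssim \ell(Q)^n$ yields the stated decay $\eta^{1/C}\ell(Q)^n$. The technical crux is establishing the thin-annulus estimate uniformly across scales and cleanly combining the Ahlfors regularity constant with the geometric decay to extract a genuine positive power of $\eta$ rather than just a logarithmic improvement.
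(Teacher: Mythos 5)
First, note that the paper does not prove Theorem~\ref{t:cd-cubes}; it is stated as a black box with the construction attributed to \cite{Da88}, \cite{Ch90}, and \cite{HM12}, so there is no ``paper's proof'' to compare against. Your outline of the construction and the verification of (i)--(iii) follows the standard Christ--David recipe (parent chains through the nets, nesting via consistency of parents, outer inclusion by telescoping, inner inclusion from the $\rho^k$-separation propagated down the tree), and, modulo the usual care about tie-breaking and Borel measurability, this part is fine.

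The serious gap is in your treatment of the small-boundary property (iv). You propose to control the boundary layer by a ``thin-annulus estimate'' around the net points of $X_j$, citing Ahlfors $n$-regularity. But Ahlfors regularity alone gives \emph{no} uniform thin-annulus estimate. Concretely, take $X = \{0,1\}^{\N}$ with the ultrametric $d(x,y) = 2^{-\min\{i\,:\,x_i\neq y_i\}}$ and the uniform measure; this is Ahlfors $1$-regular, yet the sphere $\{x : d(x,z) = 2^{-k}\}$ is clopen with measure comparable to $2^{-k}$, so the annulus $\{x : r - \epsilon < d(x,z) \leq r\}$ has measure comparable to $r$ \emph{uniformly in} $\epsilon$; the bound $(\epsilon/r)^{\alpha}r^n$ you need simply fails. (Part (iv) is still true in this example, but for the opposite reason: in an ultrametric the boundary layer is empty for small $\eta$, so the ``ambiguous'' set you are bounding is a poor surrogate for the quantity you actually want.) The standard proof of (iv), going back to Christ, is an \emph{iteration} argument rather than an annulus argument. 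Set $N_m(Q) = \{x\in Q : d(x,X\setminus Q)\leq c\rho^{k(Q)+m}\}$ for a suitable small $c>0$. Cover $N_{m-1}(Q)$ by the level-$(k(Q)+m)$ cubes $Q_i\subseteq Q$ meeting it; since their diameter is tiny compared with $\rho^{k(Q)+m-2}$, these cubes all sit inside $N_{m-2}(Q)$. Each $Q_i$ contains the inner ball $B(x_{Q_i}, c_0\ell(Q_i)/2)$, which lies at distance $\gtrsim\rho^{k(Q)+m}$ from $X\setminus Q_i\supseteq X\setminus Q$ and hence misses $N_m(Q)$, while by Ahlfors regularity it carries a fixed fraction $a>0$ of $\mu(Q_i)$. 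Summing gives $\mu(N_m(Q))\leq(1-a)\mu(N_{m-2}(Q))$, hence geometric decay in $m$, which converts to the power $\eta^{1/C}$. You should replace the thin-annulus step with this iteration; the rest of your outline would then go through.
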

In addition, we define
\begin{equation*}
    B_Q = B(x_Q, \ell(Q)).
\end{equation*}
In analogy to the dyadic cube notation, for any $R\in\scD$ and $k\in\Z$ we also write
\begin{align*}
    \scD(R) &= \Set{Q\in\scD | Q\subseteq R},\\
    \scD_k(R) &= \Set{Q\in\scD(R) | \ell(Q) = \rho^{-k}\ell(R)}.
\end{align*}

\section{Transference of weak Carleson conditions under very big pieces} \label{sec:transfer}
In this section, we prove an abstract lemma that allows one to transfer weak Carleson conditions for a variety of beta-type coefficients from very big pieces to their approximated spaces. We begin by discussing what we mean by a coefficient and by weak Carleson conditions.
\begin{definition}[Beta number-type coefficients]\label{def:co}
    Let $X$ be a metric space. We say that $\beta:X\times[0,\diam(X))\rightarrow\R$ is a \textit{coefficient} on $X$ if there exists an increasing, doubling function $f:\R\rightarrow\R$ with $f(1) = 1$ such that for any $x,y\in X$ and $t<r$ satisfying $B(y,t)\subseteq B(x,r)$, we have
    \begin{equation}\label{e:doubling}
        \beta(y,t) \leq f\left(\frac{r}{t}\right)\beta(x,r).
    \end{equation}
    We denote the set of coefficients on $X$ by $\Co(X)$. We often identify the pair $(x,r)$ with the ball $B(x,r)$ and use the convention $\beta(B(x,r)) = \beta(x,r)$. If $\scD(X)$ is a Christ-David lattice on $X$ and $Q\in\scD(X)$, we define
    \begin{equation*}
        \beta(Q) = \beta(B_Q).
    \end{equation*}
\end{definition}
\begin{definition}[weak Carleson conditions]\label{def:wcc}
    Let $X$ be a doubling metric space, let $\scD(X)$ be a Christ-David lattice for $X$, and let $\beta\in\Co(X)$. We say that $\beta\in\Co(X)$ satisfies a \textit{weak Carleson condition} on $X$ if it has the following property: For all $\epsilon > 0$, there exists a constant $C > 0$ such that for any $R\in\scD(X)$,
    \begin{equation}\label{e:WGL-eq}
        \sum_{\substack{Q\subseteq R \\ \beta(Q) > \epsilon}}\ell(Q)^n \leq C \ell(R)^n.
    \end{equation}
    We define
    \begin{equation*}
        \WC(X) = \{\beta \in \Co(X) : \beta \text{ satisfies a weak Carleson condition on $X$}\}.
    \end{equation*}
\end{definition}
\begin{remark}
    Using the ``monotonicity'' property in the definition of a coefficient, it is standard to show that $\beta\in\Co(X)$ satisfying a weak Carleson condition as in Definition \ref{def:wcc} is equivalent to a form of Carleson measure statement as in Definition \ref{def:wcd}. For $\epsilon > 0$, if we define
    \begin{equation*}
        \scB(\epsilon) = \{(x,t)\in X\times \R^+ : \beta(x,t) > \epsilon\}
    \end{equation*}
    then $\beta$ has a weak Carleson condition on $X$ if and only if $\chi_{\scB(\epsilon)}(x,t)d\scH^n(x)\frac{dt}{t}$ is a Carleson measure for every $\epsilon > 0$.
\end{remark}

We want to use \eqref{e:doubling} to show that it actually suffices to control the value of coefficients on tiny balls $\{c_0B_Q\}_{Q\in\scD}$ in a fixed lattice rather than the full balls. The next two lemmas accomplish this.
\begin{lemma}\label{l:cd-multi-res}
    Let $X$ be a doubling metric space with doubling constant $C_d$. There exists $N(C_d)<\infty$ Christ-David systems of cubes $\{\scD_i\}_{i=1}^N$ for $X$ such that the following holds: For any $x\in X,\ 0 < t < \diam(X)$, there exists $i\in\{1,\ldots, N\}$ and $Q\in\scD_i$ with $t\leq \ell(Q)\leq \frac{5}{\rho c_0}t$ such that $B(x,t)\subseteq \frac{c_0}{2}B_Q$.
\end{lemma}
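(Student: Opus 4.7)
My plan is to construct $N = N(C_d)$ nested sequences of maximal $\rho^k$-nets $\{X_k^i\}_{k \in \Z}$, $i = 1, \ldots, N$, with the property that for every $k \in \Z$ and every $x \in X$ there exists $i$ with $d(x, X_k^i) < \frac{5 c_0 \rho^k}{4}$. Feeding each $\{X_k^i\}_k$ into Theorem \ref{t:cd-cubes} then produces Christ-David systems $\scD_1, \ldots, \scD_N$. Given $(x, t)$, I choose the largest integer $k$ with $\rho^k > \frac{4t}{5 c_0}$; one also has $\rho^k \leq \frac{4t}{5 c_0 \rho}$, so the cube scale $\ell(Q) = 5 \rho^k$ automatically satisfies both $t < \frac{c_0}{4} \ell(Q)$ and $\ell(Q) \leq \frac{5}{\rho c_0} t$. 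Applying the covering property at this scale yields $i$ and $y \in X_k^i$ with $d(x, y) < \frac{5 c_0 \rho^k}{4} = \frac{c_0}{4} \ell(Q)$; by Theorem \ref{t:cd-cubes}(iii), $y = x_Q$ for a unique $Q \in \scD_i$ at scale $k$, so $x \in \frac{c_0}{4} B_Q$, as required.

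To build the systems, set $\delta_k = \frac{c_0 \rho^k}{4}$ and form a nested family of maximal $\delta_k$-nets $Z_k \subseteq Z_{k+1}$ by refining from coarse to fine (Zorn's lemma at each step). By doubling, any ball of radius $\rho^k$ meets $Z_k$ in at most $N_0 = N_0(C_d)$ points, because $\rho^k / \delta_k = 4/c_0$ is a fixed constant. Set $N := N_0 + 1$. I then inductively color the points of $\bigcup_k Z_k$ with colors in $\{1, \ldots, N\}$ so that for every $k$ and every $i$, the set $Y_k^i := \{p \in Z_k : c(p) = i\}$ is $\rho^k$-separated. When passing from $Z_{k-1}$ to $Z_k$, any same-colored pair already in $Z_{k-1}$ is $\rho^{k-1}$-separated and hence $\rho^k$-separated, so only the new points in $Z_k \setminus Z_{k-1}$ need colors; each such new point has at most $N_0$ conflicting (already or concurrently colored) neighbors in $Z_k$ within $\rho^k$, so a greedy assignment with $N_0 + 1$ colors always succeeds. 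Finally, inductively extend each $Y_k^i$ to a maximal $\rho^k$-net $X_k^i$, carrying $X_{k-1}^i$ along so that $X_{k-1}^i \subseteq X_k^i$.

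The covering property is then immediate: for any $x$ and $k$, maximality of $Z_k$ yields $y \in Z_k$ with $d(x, y) < \delta_k < \frac{5 c_0 \rho^k}{4}$, and $y \in Y_k^{c(y)} \subseteq X_k^{c(y)}$. I expect the main obstacle to lie in choreographing these constructions simultaneously: the nesting of the nets across scales, their maximality at each scale, and the $\rho^k$-separation of each color class must all be arranged together without letting $N$ depend on the scale. The doubling condition does the real combinatorial work, bounding the palette size via the uniform control on $|Z_k \cap B(y, \rho^k)|$, and the choice $\delta_k = \frac{c_0 \rho^k}{4}$ leaves the necessary margin between the maximality of $Z_k$ and the target covering radius $\frac{5 c_0 \rho^k}{4}$.
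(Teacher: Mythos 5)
Your overall strategy is the same as the paper's: introduce a maximal net at the finer scale $\sim c_0\rho^k$, split it into $N(C_d)$ pieces that are $\rho^k$-separated (you use greedy coloring, the paper peels off maximal $\rho^k$-separated subsets one at a time; both count via doubling), complete each piece to a maximal $\rho^k$-net, and feed the $N$ resulting net sequences into Theorem~\ref{t:cd-cubes}. Your deduction of the lemma from the covering property and your scale arithmetic are correct, and the cross-scale coloring argument is sound as far as it goes.

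The genuine gap is in the last step, ``inductively extend each $Y_k^i$ to a maximal $\rho^k$-net $X_k^i$, carrying $X_{k-1}^i$ along so that $X_{k-1}^i \subseteq X_k^i$.'' For this extension to exist, the set $X_{k-1}^i \cup Y_k^i$ must be $\rho^k$-separated, and nothing in your construction guarantees this. Splitting into cases: pairs inside $X_{k-1}^i$ are $\rho^{k-1}$-separated, hence $\rho^k$-separated; pairs inside $Y_k^i$ are handled by the coloring; but a ``filler'' point $w\in X_{k-1}^i\setminus Y_{k-1}^i$ (added purely to achieve maximality at scale $k-1$, and in general lying outside $\bigcup_{k'} Z_{k'}$) is never seen by the coloring and can lie within $\rho^k$ of a newly colored point $y\in Y_k^i\setminus Y_{k-1}^i$. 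You flag this worry yourself (``I expect the main obstacle to lie in choreographing these constructions simultaneously''), but it is not resolved, and it is precisely where the proof breaks. For what it is worth, the paper's own proof does not attempt nesting at all: it builds the $N$ maximal $\rho^k$-nets independently at each scale $k$ and silently applies Theorem~\ref{t:cd-cubes} to the resulting non-nested collections, so it is implicitly relying on a version of the dyadic construction (in the spirit of \cite{HM12}) that tolerates non-nested nets. If you adopt that reading of Theorem~\ref{t:cd-cubes}, you can simply drop the nesting requirement in your last step and your argument closes; if you insist on literal nesting, you need an additional idea (for instance, drawing the filler points only from $\bigcup_{k'\geq k}Z_{k'}$ and then proving cross-scale separation, which does not follow directly from the coloring as stated and needs further work).
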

\begin{proof}
    Fix $\rho < \frac{1}{1000}$. For each $k$, let $\td{X}_k$ be a maximal $c_0\rho^k$-net for $X$. We now iteratively construct maximal $\rho^k$-nets $X_k^1,X_k^2,\ldots,X_k^{N},\ldots$ in the following way. Let $X_k^1$ be a completion of a maximal $\rho^k$-separated subset of $\td{X}_k$ to a maximal $\rho^k$-net for $X$. Given $X_k^i$ for any $i > 0$, construct $X_k^{i+1}$ by completing a maximal $\rho^k$-separated subset of $Y_k^i \vcentcolon= \td{X}_k\setminus(X_k^1\cup X_k^{2}\cup \ldots \cup X_k^{i})$ to a maximal $\rho^k$-net for $X$. We claim that this process terminates in $N(C_d)$ steps, giving for each $k\in\Z$ a collection of maximal $\rho^k$-nets $X_k^1,\ldots,X_k^N$. Indeed, let $B$ be a ball of radius $2\rho^k$. By doubling, there exists $N(C_d) <\infty$ such that $\#(B\cap \td{X}_k) \leq N(C_d)$. Suppose that $\frac{1}{2}B\cap Y_k^j \not=\varnothing$ for some $j > 0$. Then, because $X_{k}^{j+1}$ is maximal, there exists some $x\in B\cap Y_k^{j}$ such that $x\in X_k^{j+1}$. Therefore, $\#(B\cap Y_k^{j+1}) < \#(B\cap Y_k^{j})$ whenever $\frac{1}{2}B\cap Y_k^j\not=\varnothing$. This means $\frac{1}{2}B\cap Y_{k}^{N+1} = \varnothing$ for any such $B$, implying $Y_k^{N+1} = \varnothing$ and $\td{X}_k \subseteq \cup_{i=1}^N X_k^i$ as desired.
    
    We now show that the lemma follows from this. Recall that Theorem \ref{t:cd-cubes} takes as input a collection $\{X_k\}_{k\in\Z}$ of maximal $\rho^k$-nets for $X$ and outputs a system of cubes $\scD$ such that every $x^\alpha_k\in X_k$ is the ``center'' of a cube $Q_k^\alpha\in\scD$ with $B_X(x_k^\alpha,c_05\rho^k) = c_0B_{Q_k^\alpha}\subseteq Q_k^\alpha$. We apply Theorem \ref{t:cd-cubes} to the collection $\{X_k^i\}_{k\in\Z}$ for every $1\leq i \leq N$ and receive a Christ-David system $\scD_i$ such that each point $\td{x}_k\in \td{X}_k$ is the center of some $Q\in\scD_i$ for some $i$. So, let $x\in X$, $0 < t < \diam(X)$, and let $k\in\Z$ such that $c_0\rho^{k-1} \leq t < c_0\rho^k$. Because $\td{X}_k$ is a maximal $c_0\rho^k$-net for $X$, there exists $\td{x}_k\in \td{X}_k$ such that $d(x,\td{x}_k) < c_0\rho^{k}$. Because $\td{X}_k\subseteq \cup_{i=1}^NX_k^i$, there then exists $1 \leq i \leq N$ and $Q\in\scD_i$ such that $\td{x}_k = x_{Q}$ so that $x\in B(x_Q,c_0\rho^k) = \frac{1}{5}B(x_Q,c_0\ell(Q)) = \frac{c_0}{5}B_Q$. Similarly, $\frac{\rho c_0}{5}\ell(Q) \leq t < \frac{c_0}{5}\ell(Q)$.
\end{proof}
\begin{lemma}\label{l:small-ball-wgl}
    Let $X$ be doubling, let $\beta\in\Co(X)$, and let $\scD(X)$ be any Christ-David lattice for $X$. Suppose that the collection
    \begin{equation*}
        \scB_0 = \{Q\in\scD(X) : \beta(c_0B_Q) > \epsilon\}
    \end{equation*}
    is Carleson for any $\epsilon > 0$. Then $\beta\in\WC(X)$.
\end{lemma}
\begin{proof}
    Using Lemma \ref{l:cd-multi-res}, we get a collection of $N(C_0,n)$ Christ-David lattices $\{\scD_i\}_{i=1}^N$ such that for any $x\in X$ and $0 < t < \diam(X)$, there exists $1 \leq i \leq N$ such that there is $Q\in\scD_i$ with $B(x,t)\subseteq \frac{c_0}{2}B_Q$ and $t\asymp \ell(Q)$. Therefore, for any $Q\in\scD(X)$, there exists $\td{Q} \in \cup_i\scD_i$ such that $B_Q\subseteq c_0B_{\td{Q}}$ and $\ell(Q)\asymp \ell(\td{Q})$. Define
    \begin{equation*}
        \scB_{\epsilon'} = \{Q\in\scD(X) : \beta(\td{Q}) > \epsilon'\}.
    \end{equation*}
    Notice that if $Q\in\scD\setminus\scB_{\epsilon'}$, then
    \begin{equation*}
        \beta(Q) = \beta(B_Q) \leq f\left(\frac{c_0\ell(\td{Q})}{\ell(Q)}\right) \beta(c_0B_{\td{Q}}) \lesssim \beta(\td{Q}) \leq C\epsilon' < \epsilon
    \end{equation*}
    as long as $\epsilon'$ is small enough. Therefore, it suffices to show that $\scB_{\epsilon'}$ is Carleson. Fix $R\in\scD(X)$ and observe for each $1 \leq i \leq N$, there exists a collection of at most $N'(C_0,n)$ cubes $\scQ_i$ such that for all $Q\in\scD(R)$, there exists $i$ and $Q'\in\scD_i$ such that $\td{Q}\subseteq Q_i$. We also have that any such $Q_i$ has $\ell(Q_i)\asymp_{C_0,n} \ell(R)$. Using the fact that the mapping $Q\mapsto \td{Q}$ is also at most $C(C_0,n)$ to one, we can estimate
    \begin{align*}
        \sum_{\substack{Q\subseteq R \\ Q\in\scB_{\epsilon'}}}\ell(Q)^n \leq \sum_{\substack{Q\subseteq R \\ Q\in\scB_{\epsilon'}}}\ell(\td{Q})^n \leq C\sum_{i=1}^N\sum_{Q\in\scQ_i}\sum_{\substack{Q'\subseteq Q \\ \beta(Q') > \epsilon'}} \ell(Q')^n \lesssim \sum_{i=1}^N\sum_{Q\in\scQ_i} \ell(Q)^n \lesssim \ell(R)^n.
    \end{align*}
\end{proof}
\begin{remark}
    It is not much more work to show that if one instead defines $\beta(Q) = \beta(AB_Q)$ for some $A\geq 1$, then Lemma \ref{l:small-ball-wgl} holds with constants additionally depending on $A$.
\end{remark}
Reducing weak Carleson conditions to controlling $\beta(c_0B_Q)$ rather than $\beta(B_Q)$ is convenient for us because we are interested in approximation of spaces by very big pieces, and the following lemma gives a simple way of finding large families of cubes which have almost all of their measure contained in a given very big piece $Y$. That is, cubes $Q$ such that $\scH^n(Q\setminus Y) \leq \epsilon\scH^n(Q)$. For such a cube, we can immediately control the measure of $c_0B_Q\subseteq Q$ outside of $Y$ in a way we cannot for $B_Q$.

\begin{lemma}\label{l:many-cubes-big-int}
    Let $X$ be a doubling metric space and let $\scD(X)$ be a Christ-David lattice. Let $\epsilon > 0$, let $F\subseteq X \text{ be $\scH^n$ measurable}$, and let $R\in\scD(X)$ be such that $\scH^n(R\setminus F) \leq \epsilon\scH^n(R)$. Define
    \begin{equation}\label{e:R-tilde}
        \tilde{R} = \Set{x\in R | \begin{array}{l}
            \text{For all $Q\in\scD$ such that $x\in Q \subseteq R$, } \\
            \scH^n(Q\cap F) \geq (1-2\epsilon)\scH^n(Q) \end{array} }.
    \end{equation}
    We have $\scH^n(\tilde{R}) \geq \epsilon\scH^n(R)$.
\end{lemma}
\begin{proof}
    This proof is essentially contained in the proof of Lemma IV.2.2.38 in \cite{DS93}, but we need to be precise about the constant $\epsilon$. If $x\in R\setminus\tilde{R}$, then $x$ is contained in some cube $Q$ such that $\scH^n(Q\cap F) \leq (1-2\epsilon)\scH^n(Q)$. Let $\{Q_i\}_i$ be a maximal disjoint family of such cubes so that $R\setminus\tilde{R} = \bigcup_i Q_i$. Then
    \begin{align}\label{e:Rtilde1}
        \scH^n((R\setminus\tilde{R})\cap F) &= \sum_i\scH^n(Q_i\cap F) \leq (1-2\epsilon)\sum_i\scH^n(Q_i)\\ \nonumber
        &\leq (1-2\epsilon)\scH^n(R\setminus \tilde{R}) \leq (1-2\epsilon)\scH^n(R).
    \end{align}
    On the other hand,
    \begin{align}\label{e:Rtilde2}
        \scH^n((R\setminus\tilde{R})\cap F) &= \scH^n((R\cap F) \setminus \tilde{R}) \geq \scH^n(R\cap F) - \scH^n(\tilde{R}) \\\nonumber
        &\geq (1-\epsilon)\scH^n(R) - \scH^n(\tilde{R}).
    \end{align}
    Combining \eqref{e:Rtilde1} and \eqref{e:Rtilde2} and rearranging gives
\begin{equation*}
    \scH^n(\tilde{R}) \geq (1-\epsilon)\scH^n(R) - (1-2\epsilon)\scH^n(R) = \epsilon \scH^n(R).\qedhere
\end{equation*}
\end{proof}
The last ingredient for our transference theorem is the following abstract version of the John-Nirenberg-Stromberg lemma.
\begin{lemma}[\cite{BHS23} Lemma 4.2.8, \cite{DS93} Lemma IV.1.12]\label{l:stability}
    Let $X$ be an Ahlfors $n$-regular metric space and $\scD$ a system of Christ-David cubes for $X$. Let $\alpha:\scD \rightarrow [0,\infty)$ be given and suppose there are $N,\eta > 0$ such that
    \begin{equation}\label{e:stable-ineq}
        \scH^n\left(\Set{ x\in R | \sum_{\substack{Q\subseteq R \\ x\in Q}}\alpha(Q)\leq N}\right) \geq \eta\ell(R)^n
    \end{equation}
    for all $R\in\scD$. Then,
    \begin{equation*}
        \sum_{Q\subseteq R}\alpha(Q)\ell(Q)^n \lesssim_{N,\eta}\ell(R)^n
    \end{equation*}
    for all $R\in \scD$.
\end{lemma}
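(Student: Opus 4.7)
The plan is to run a John--Nirenberg style good-$\lambda$ stopping-time argument on the Carleson function
\[ F_R(x) \vcentcolon= \sum_{\substack{Q\subseteq R\\ x\in Q}}\alpha(Q), \]
and integrate exponential decay of its super-level sets. Note that Ahlfors regularity gives $\int_R F_R\, d\scH^n = \sum_{Q\subseteq R}\alpha(Q)\scH^n(Q) \asymp \sum_{Q\subseteq R}\alpha(Q)\ell(Q)^n$, so the desired Carleson bound is equivalent to $\int_R F_R\, d\scH^n \lesssim_{N,\eta}\ell(R)^n$.

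The first step is to extract a free a priori bound from the hypothesis: applying \eqref{e:stable-ineq} with $R$ replaced by an arbitrary $Q\in\scD$ shows the set $\{x\in Q : F_Q(x)\leq N\}$ is nonempty, which together with the pointwise lower bound $F_Q\geq\alpha(Q)$ on $Q$ forces $\alpha(Q)\leq N$ for every cube. With this in hand, I would set $A_k\vcentcolon=\{x\in R : F_R(x) > kN\}$ and identify $A_k = \bigcup\mathcal{F}_k$, where $\mathcal{F}_k$ is the collection of maximal cubes $Q\subseteq R$ with $\sum_{Q\subseteq Q'\subseteq R}\alpha(Q') > kN$. The identification uses only that the cubes of $\scD$ containing a fixed point form a chain, so the minimal such chain element is contained in a unique maximal one.

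The heart of the proof is the recursive estimate $\scH^n(A_{k+1})\leq(1-\eta/C_0)\scH^n(A_k)$. For $Q\in\mathcal{F}_k$ with $k\geq 1$, the maximality of $Q$ together with $\alpha(Q)\leq N$ gives $\sum_{Q\subsetneq Q'\subseteq R}\alpha(Q')\leq kN$, so for every $x\in Q$ one has $F_R(x)\leq F_Q(x)+kN$, hence $Q\cap A_{k+1}\subseteq\{x\in Q : F_Q(x) > N\}$. Applying \eqref{e:stable-ineq} to $Q$ and using $\scH^n(Q)\leq C_0\ell(Q)^n$ bounds the latter by $(1-\eta/C_0)\scH^n(Q)$; summing over the pairwise-disjoint cubes of $\mathcal{F}_k$ yields the recursion, and iterating from $A_0=R$ gives $\scH^n(A_k)\lesssim_{C_0}(1-\eta/C_0)^k\ell(R)^n$. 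Decomposing $R=\bigsqcup_k(A_k\setminus A_{k+1})$, on which $F_R\leq(k+1)N$, and summing the resulting convergent geometric series completes the proof.

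The main obstacle is bookkeeping rather than deep technique: one must choose the stopping threshold so that at every stopping cube $Q\in\mathcal{F}_k$ the excess of the telescoping sum over $kN$ is controlled by $\alpha(Q)\leq N$. Without the free bound $\alpha(Q)\leq N$ coming from the hypothesis itself, the key inequality $F_R(x)\leq F_Q(x)+kN$ on $Q$ could fail by an uncontrolled amount and the induction would not close.
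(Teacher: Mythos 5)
The paper does not prove this lemma; it cites \cite{BHS23} Lemma 4.2.8 and \cite{DS93} Lemma IV.1.12 and uses the statement as a black box. So there is no ``paper's own proof'' to compare against, and the question is whether your reconstruction is sound.

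Your argument is correct and is essentially the standard John--Nirenberg/Str\"omberg stopping-time proof that appears in those references. The key points all check out. First, applying \eqref{e:stable-ineq} with $R$ replaced by $Q$ does force $\alpha(Q)\leq N$ for every $Q$, since the set $\{x\in Q: F_Q(x)\leq N\}$ has positive measure and $F_Q\geq\alpha(Q)$ pointwise on $Q$. Second, the identification $A_k=\bigcup\mathcal{F}_k$ is valid: for $x\in R$, the partial sums $S(Q)=\sum_{Q\subseteq Q'\subseteq R}\alpha(Q')$ are nondecreasing as $Q$ shrinks along the chain of cubes containing $x$, with $S(Q)\nearrow F_R(x)$; if $F_R(x)>kN$ then some $S(Q)>kN$, and the collection of such $Q$ containing $x$ is a down-set in the chain with a unique maximal element, which belongs to $\mathcal{F}_k$. (Your phrasing ``the minimal such chain element is contained in a unique maximal one'' is a little imprecise since the chain may have no minimal element, but the intended argument is clear and correct.) Third, the recursion is exactly right: for $Q\in\mathcal{F}_k$ with $k\geq 1$ one has $Q\subsetneq R$ because $\alpha(R)\leq N\leq kN$ excludes $R$; maximality at the parent gives $\sum_{Q\subsetneq Q'\subseteq R}\alpha(Q')\leq kN$, hence $F_R\leq F_Q+kN$ on $Q$, hence $Q\cap A_{k+1}\subseteq\{F_Q>N\}$, whose measure is at most $\scH^n(Q)-\eta\ell(Q)^n\leq(1-\eta/C_0)\scH^n(Q)$ by \eqref{e:stable-ineq} and Ahlfors regularity. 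Summing over the disjoint cubes of $\mathcal{F}_k$ gives $\scH^n(A_{k+1})\leq(1-\eta/C_0)\scH^n(A_k)$ (also note $\eta\leq C_0$ is automatic from $\eta\ell(R)^n\leq\scH^n(R)\leq C_0\ell(R)^n$, so the factor is in $[0,1)$). Finally, Tonelli gives $\sum_{Q\subseteq R}\alpha(Q)\scH^n(Q)=\int_R F_R\,d\scH^n$, the exponential decay makes $\bigcap_k A_k$ null so $F_R<\infty$ a.e., and the layer-cake decomposition closes the argument with a convergent series $\sum(k+1)(1-\eta/C_0)^k$. Ahlfors regularity converts $\scH^n(Q)\asymp\ell(Q)^n$ at the start and end. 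This is a complete and correct proof.
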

For proving weak Carleson conditions, we can apply this lemma with $\alpha(Q) = \chi_{\scB}(Q)$ where $\scB$ is a bad collection of cubes which we want to show has a packing condition. If we can show that \eqref{e:stable-ineq} holds for this choice of $\alpha$, then we will conclude
\begin{equation}
    \sum_{\substack{Q\subseteq R \\ Q\in\scB}}\ell(Q)^n = \sum_{Q\subseteq R}\chi_{\scB}(Q)\ell(Q)^n \lesssim_{N,\eta} \ell(R)^n.
\end{equation}

The theorem of this section is a tool for transferring weak Carleson conditions from \textit{very big pieces} to those pieces' approximated spaces.
\begin{definition}[very big pieces]
    Let $Z$ be a metric space, let $X\subseteq Z$ be Ahlfors $n$-regular, and let $\scY$ be a class of Ahlfors $n$-regular subsets of $Z$. We say that $X\in\VBP(\scY)$ if for every $\epsilon > 0$, there exists $C_0 > 0$ such that for every $x\in X$ and $0 < r < \diam(X)$, there exists an Ahlfors $n$-regular space $Y\in\scY$ with constant $C_0$ such that
    \begin{equation*}
        \scH^n(B(x,r)\cap X\cap Y) \geq (1-\epsilon)\scH^n(B(x,r)\cap X).
    \end{equation*}
\end{definition}
\begin{theorem}[transference under VBP]\label{t:wgl-suff-cond}
    Let $X\subseteq Z$ be Ahlfors $n$-regular and let $\mathscr{Y}$ be a class of Ahlfors $n$-regular spaces in $Z$ such that $X\in\VBP(\mathscr{Y})$. Suppose $\beta\in\Co(X)$ is such that for any $Y\in\mathscr{Y}$, there exists a coefficient $\beta_Y\in\Co(Z)$ such that $\beta_Y\in\WC(X)\cup\WC(Y)$ with constant depending only on $n$, the regularity constants for $X$ and $Y$, and the $\VBP$ constant such that for any $Q\in\scD(X)$,
        \begin{equation}\label{e:transfer}
            \beta(c_0B_Q) \lesssim \frac{\scH^n(Q\setminus Y)}{\ell(Q)^n} + \beta_Y(c_0B_Q).
        \end{equation} \label{i:X-cond}
    Then $\beta\in \WC(X)$.
\end{theorem}
\begin{remark}
    If $Q$ is such that $\scH^n(Q\setminus Y) \gtrsim \ell(Q)^n$, then \eqref{e:transfer} is trivially satisfied and there is no need for $\beta_Y$ near $Q$. Although we take $\beta_Y\in\WC(X)\cup\WC(Y)$ for technical reasons, in practice $\beta_Y$ is only used in places where $X$ and $Y$ have some overlap. One should think of $\beta_Y$ as being a controlled coefficient on $Y$ which only sees $Y\cap X$.
\end{remark}
\begin{proof}[Proof of Theorem \ref{t:wgl-suff-cond}]
    Let $\scD(X)$ be a Christ-David lattice for $X$ and fix $0 < \epsilon' < \epsilon$. By Lemma \ref{l:small-ball-wgl}, it suffices to show that the collection
    \begin{equation*}
        \scB_\epsilon = \{Q\in\scD(X) : \beta(c_0B_Q) > \epsilon\}
    \end{equation*}
    is Carleson. By Lemma \ref{l:stability}, it further suffices to show that there exist $N,\eta > 0$ such that for all $R\in\scD(X)$,
    \begin{equation}\label{e:john-niren}
        \scH^n\left(\Set{ x\in R | \sum_{\substack{Q\subseteq R \\ x\in Q}}\chi_{\scB}(Q)\leq N}\right) \geq \eta\ell(R)^n.
    \end{equation}
    Fix $R\in\scD(X)$ and let $Y\in\scY$ be such that
    \begin{equation*}
        \scH^n(B_R\cap X \setminus Y) \leq \epsilon'\scH^n(R).
    \end{equation*}
    Lemma \ref{l:many-cubes-big-int} implies that the set
    \begin{equation*}
        \tilde{R} = \Set{x\in R | \begin{array}{l}
                \text{For all $Q\in\scD$ such that $x\in Q \subseteq R$, } \\
                \scH^n(Q\cap Y) \geq (1-2\epsilon')\scH^n(Q) \end{array} }
    \end{equation*}
    has $\scH^n(\tilde{R}) \geq \epsilon' \scH^n(R)$. Let $\scD(\tilde{R}) = \{Q\in\scD(X) : \exists x\in\tilde{R},\ x\in Q \subseteq R\}$. For any $Q\in\scD(\tilde{R})$, we have that $c_0B_Q\cap X\subseteq Q$, so that 
    \begin{equation*}
        \scH^n(c_0B_Q\cap X\setminus Y) \leq 2\epsilon'\scH^n(Q) \lesssim \epsilon'\ell(Q)^n.
    \end{equation*}
    Assume first that $\beta_Y\in\WC(X)$ satisfies \eqref{e:transfer} and define 
    \begin{align*}
        \tilde{\scG} &= \{Q\in\scD(\tilde{R}): \beta_Y(c_0B_Q) \leq \epsilon'\},\\
        \td{\scB} &= \td{\scD}(R)\setminus \td{\scG}.
    \end{align*}
    Notice that for any $Q\in \tilde{\scG}$, we have
    \begin{equation*}
        \beta(c_0B_Q) \lesssim \frac{\scH^n(c_0B_Q\cap X\setminus Y)}{\ell(Q)^n} + \beta_Y(c_0B_Q) \lesssim \epsilon' + \epsilon' \leq \epsilon
    \end{equation*}
    as long as $\epsilon'$ is small enough, implying that $\chi_{\scB_\epsilon}(Q) \leq \chi_{\tilde{\scB}}(Q)$ for any $Q\in\tilde{\scD}$. Therefore, for any $x\in\td{R}$
    \begin{equation*}
        \sum_{\substack{Q\subseteq R \\ x\in Q}}\chi_{\scB_\epsilon}(Q) \leq \sum_{\substack{Q \subseteq R \\ x\in Q}}\chi_{\td{\scB}}(Q)
    \end{equation*}
    This means that the left-hand side of \eqref{e:john-niren} is bounded below by the same expression with $R$ replaced by $\tilde{R}$ and $\scB$ replaced by $\tilde{\scB}$. We now focus on proving this version of \eqref{e:john-niren}. We can use Chebyshev's inequality to estimate
    \begin{align}\nonumber
        \scH^n\left(\Set{x\in\tilde{R} | \sum_{\substack{Q\subseteq R \\x\in Q}}\chi_{\tilde{\scB}}(Q) > N}\right) &\leq \frac{1}{N}\int_{\tilde{R}}\sum_{\substack{Q\subseteq R \\x\in Q}}\chi_{\tilde{\scB}}(Q) \lesssim \frac{1}{N}\sum_{Q \in \scD(\td{R})\cap\tilde{\scB}}\ell(Q)^n\\
        &\leq \frac{1}{N}\sum_{\substack{Q\subseteq R \\ \beta_Y(c_0B_Q) > \epsilon'} }\ell(Q)^n \lesssim \frac{1}{N}\ell(R)^n \nonumber
    \end{align}
    where the final inequality follows from the fact that $\beta_Y$ satisfies a weak Carleson condition. The result follows by taking $N$ sufficiently large since the left hand side of \eqref{e:john-niren} is bounded below by
    \begin{equation*}
        \scH^n(\td{R}) - \frac{C}{N}\ell(R)^n \geq \epsilon\scH^n(R) - \frac{C}{N}\ell(R)^n \gtrsim_\epsilon \ell(R)^n.
    \end{equation*}
    This completes the proof in the case $\beta_Y\in\WC(X)$. If instead $\beta_Y\in\WC(Y)$, the only additional complication is in showing $\td{\scB}$ is Carleson. This follows because for each $Q\in\scD(X)$ with $c_0B_Q\cap Y \not=\varnothing$, there exists $Q'\in\scD(Y)$ with $\ell(Q') \lesssim \ell(Q)$ and $5B_{Q'}\supseteq B_Q$ where the mapping $Q\mapsto Q'$ is at most $C$ to one with $C$ depending on $n$ and the regularity constant for $Y$. The fact that $\td{\scB}$ is Carleson now follows from the fact that $\beta_Y(c_0B_Q)\lesssim \beta_Y(10B_{Q'})$ and the Carleson condition for $\beta_Y$ on $Y$.
\end{proof}
\section{Oscillation of means}\label{sec:wavelets}
In this section, we introduce the notion of a sandwichable family of sets, review necessary facts about dyadic decompositions of $L^2$ functions, and prove Lemma \ref{c:oscillation-carleson}, one of our main tools for the proof of Theorem \ref{t:gen-wcd}. 

\begin{definition}
    We follow the presentation of \cite{To12}. Given $h:\R^n\rightarrow\R$ and $Q\in\cD(\R^n)$, define
    \begin{equation*}
        \Delta_Qh(x) = \begin{cases}
            \fint_Ph(z)dz - \fint_Qh(z)dz & \text{ if $x\in P$, where $P$ is a child of $Q$},\\
            0 & \text{ otherwise}
        \end{cases}
    \end{equation*}
    If $h\in L^2(\R^n)$, then 
    \begin{equation*}
        h = \sum_{Q\in\cD(\R^n)}\Delta_Qh\quad \text{ and } \quad h\chi_Q = \fint_Q h + \sum_{R\subseteq Q}\Delta_Rh
    \end{equation*}
    where the sums converge in $L^2$ and $\langle \Delta_Qh, \Delta_{Q'}h\rangle_{L^2} = 0$ when $Q\not= Q'$ so that $\|h\|_2 = \sum_{Q\in\cD} \|\Delta_Qh\|_2^2$. One can view $\Delta_Qh$ as a projection of $h$ onto the subspace of $L^2$ formed by the Haar wavelets $h_Q^{\epsilon},\ \epsilon\in\{0,1\}^n\setminus\{(0,0,\ldots,0)\}$ associated to $Q$.
\end{definition}
We now use the wavelet-like decomposition of $h$ to define coefficients $\Delta_k^h(Q)$ which, roughly speaking, measure the variation in means of $h$ from $Q$ through to its $k$-th generation descendants.
\begin{definition}
For any $k\in\N$, $Q\in\cD(\R^n)$, and $h\in L^2(\R^n)$, define
\begin{equation}
    \Delta_k^h(Q)^2 = \sum_{j=0}^k\sum_{R\in\cD_j(Q)}\|\Delta_Rh\|_2^2.
\end{equation}
\end{definition}

\begin{remark}[Properties of $\Delta_k^h$]\label{r:delta-props}
    Notice that if $h\in L^\infty(\R^n)$, then $\Delta^h_k$ has a type of strong Carleson condition since for any $Q_0\in\cD(\R^n)$.
    \begin{equation*}
        \sum_{Q\subseteq Q_0}\Delta^h_k(Q)^2 = \sum_{Q\subseteq Q_0}\sum_{j=0}^k\sum_{R\in \cD_j(Q)}\|\Delta_Rh\|_2^2 \lesssim_{k,n} \sum_{R\subseteq Q_0}\|\Delta_Rh\|_2^2\lesssim_{\|h\|_\infty} \ell(Q_0)^n.
    \end{equation*}
    This implies the weak Carleson condition
    \begin{equation}\label{e:wavelet-carleson}
        \sum_{\substack{Q\subseteq Q_0 \\ \Delta_k^h(Q) >\delta \ell(Q)^{n/2}}}\ell(Q)^n \lesssim_{\delta}\sum_{Q\subseteq Q_0}\Delta_k^h(Q)^2 \lesssim_{k,n,\|h\|_\infty} \ell(Q_0)^n.
    \end{equation}
    $\Delta^h_k$ also scales appropriately in the following manner: Let $Q, \td{Q} \in\cD(\R^n)$ and let $T:\R^n\rightarrow\R^n$ be the affine map sending $\td{Q}$ onto $Q$ by
    \begin{equation}\label{e:affine-map}
        T(x) = x_Q + \left(\frac{x - x_{\td{Q}}}{\ell(\td{Q})}\right)\ell(Q)
    \end{equation}
    where $x_{\td{Q}}$ is the center of $\td{Q}$. Let $\td{h}\in L^2(\td{Q})$ and set $h = \td{h}\circ T^{-1}$. Notice that
    \begin{align*}
        \|h\|_{L^2(Q)}^2 = \int_{T(\td{Q})}(\td{h}\circ T^{-1})^2 = \int_{\td{Q}} \td{h}^2 \frac{\ell(Q)^n}{\ell(\td{Q})^n} = \frac{\ell(Q)^n}{\ell(\td{Q})^n}\|\td{h}\|_2^2.
    \end{align*}
    Similarly, notice that if $V\subseteq Q$ and $\td{V}\subseteq\td{Q}$ with $T(\td{V}) = V$, then
    \begin{align*}
        \|\Delta_Vh\|_2^2 &= \int_V(\Delta_Vh(x))^2dx = \int_{T(\td{V})}(\Delta_{\td{V}}\td{h}(T^{-1}(x)))^2dx\\
        &= \left(\frac{\ell(Q)}{\ell(\td{Q})}\right)^n\int_{\td{V}}(\Delta_{\td{V}}\td{h})^2dx = \left(\frac{\ell(Q)}{\ell(\td{Q})}\right)^n\|\Delta_{\td{V}}\td{h}\|_2^2
    \end{align*}
    which gives $\Delta_k^h(Q)^2 = \left(\frac{\ell(Q)}{\ell(\td{Q})}\right)^n\Delta_k^{\td{h}}(\td{Q})^2$.
\end{remark}

The $\Delta_k^h$ coefficients are useful for controlling the oscillation of averages of $h$ over certain subsets in the domain. The most abstract families of subsets which we control averages over here are called $\triangle$-compact families.
\begin{definition}[$d_\triangle,\ \triangle$-compactness]
    Define a function $d_{\triangle}$ on pairs of Lebesgue measurable subsets of $\R^n$ by 
    \begin{equation*}
        d_{\triangle}(A,B) = \scL(A\triangle B).
    \end{equation*}
    It is well-known that $d_{\triangle}$ is a pseudometric which becomes a metric on equivalence classes of subsets formed by the relation $A\sim B \iff \scL(A\triangle B) = 0$. We say that a family $\scE$ of $\scL$-measurable subsets of $\R^n$ is \textit{$\triangle$-compact} if it is subsequentially compact with respect to $d_\triangle$. That is, for every sequence $E_j\in\scE$, there exists a subsequence $E_{j_k}$ such that there exist a $\scL$-measurable $E\subseteq\R^n$ such that $d_\triangle(E_{j_k},E)\rightarrow 0$ as $k\rightarrow\infty$.
\end{definition}
Roughy speaking, any class of $\triangle$-compact sets with Lebesgue measure bounded from below have a subsequence whose measure concentrates around some limit set. This definition arises in our context  because for any such concentrating sequence one can ``exchange'' integrals over any $E_{j_k}, E_{j_m}$ for large $k$ and $m$ with small cost. It will actually be most convenient to introduce another class of sets that is equivalent to the class of $\triangle$-compact sets (see Lemma \ref{l:symm-sand}) but that more directly lays out the ``concentration'' properties we want for a well-chosen subsequence in the proof of Lemma \ref{l:wavelet-weak-flat-L2-sand}.

\begin{definition}[sandwichable families]
    Let $L \geq 1$ and let $\scE_L$ be a family of $\scL$-measurable subsets of $[0,1]^n$ with $\scL(E) \gtrsim_{L,n} 1$ for all $E\in\scE_L$. 
    We say that the family $\scE_L$ is \textit{sandwichable} if for every sequence $E_j\in\scE_L$, there exists a subsequence $E_{j_k}$ with the following property: There exist families of ``lower'' subsets $\{L_\epsilon\}_{0 < \epsilon < \frac{1}{2}},$ and ``upper'' subsets $\{U_\epsilon\}_{0 < \epsilon < \frac{1}{2}}$ such that:
    \begin{enumerate}
        \item For all $0 < \epsilon_0 < \frac{1}{2}$, there exists $k_0 > 0$ such that for all $k \geq k_0$, we have $L_\epsilon \subseteq E_{j_k}\subseteq  U_\epsilon$, \label{i:outside-msr}
        \item For any $\epsilon' < \epsilon$, $L_\epsilon \subseteq L_{\epsilon'}$ and $U_{\epsilon'}\subseteq U_\epsilon$,\label{i:inc-and-dec}
        \item $\frac{\scL(L_\epsilon)}{\scL(U_\epsilon)} \geq 1-\epsilon$.\label{i:close-msr}
    \end{enumerate}\label{i:sandwich}
    We also call such a sequence \textit{sandwichable}. For any given $Q\in\cD(\R^n)$, let $T_Q$ be the affine map sending $[0,1]^n$ onto $Q$ and define
    \begin{equation*}
        \scE_L(Q) = T_Q(\scE_L).
    \end{equation*}
\end{definition}
The motivating example of a sandwichable or family is that of $L$ bi-Lipschitz images of a fixed $n$-dimensional Euclidean ball (see Lemma \ref{l:bilip-sand}). The results involving sandwichable families that follow are more general than what is necessary for our applications (it would suffice to consider only the family of normed balls $L$-bi-Lipschitz to a fixed Euclidean ball), but we believe these more general results could be of independent interest. We now show this definition is equivalent to the more intuitive $\triangle$-compactness we introduced first.
\begin{lemma}\label{l:symm-sand}
    Let $L > 0$ and let $\scE_L$ be a family of $\scL$-measurable subsets of $[0,1]^n$ such that $\scL(E) \gtrsim_{L,n} 1$ for all $E\in\scE_L$. The family $\scE_L$ is sandwichable if and only if $\scE_L$ is $\triangle$-compact.
\end{lemma}
\begin{proof}
    First, suppose $\scE_L$ is sandwichable and let $E_j\in\scE_L$ be a sandiwchable sequence. Define $E = \bigcap_{0 < \epsilon < 1/2} U_\epsilon$. We want to show that $d_\triangle(E_j,E)\rightarrow 0$. Fix $\epsilon_0 > 0$ and choose $j_0 > 0$ so that property \ref{i:outside-msr} is satisfied for $\epsilon_0$. Then for any $k \geq j_0$
    \begin{align*}
        d_\triangle(E_k,E) &= \scL(E_k\triangle E) = \scL\left(E_k\setminus\bigcap_{0 < \epsilon < 1/2}U_\epsilon\right) + \scL\left(\bigcap_{\epsilon > 0}U_\epsilon\setminus E_k\right)\\
        &\leq 0 + \scL\left(U_{\epsilon_0}\setminus E_k\right) \leq \scL(U_{\epsilon_0}\setminus L_{\epsilon_0})\\
        &\leq \epsilon_0\scL(U_{\epsilon_0}) \leq \epsilon_0.
    \end{align*}
    Now, let $\scE_L$ be sub-sequentially compact with respect to $d_\triangle$. Let $E_j$ be a subsequence with limit $E$ such that $\scL(E_j\triangle E) \leq 2^{-j}$ and fix $\epsilon > 0$. Let $j_0$ be the smallest integer such that $2^{-j_0} \leq c\epsilon$ for $0 < c < 1$ to be determined later. Define
    \begin{align*}
        L_\epsilon &= \bigcap_{j\geq j_0} E_j,\\
        U_\epsilon &= \bigcup_{j \geq j_0}E_j.
    \end{align*}
    It is immediate that $L_\epsilon$ and $U_\epsilon$ satisfy properties \ref{i:outside-msr} and \ref{i:inc-and-dec} in the definition of sandwichable. To verify the final property, notice that
    \begin{align*}
        E\triangle U_\epsilon = \bigcup_{j\geq j_0} E\triangle E_j
    \end{align*}
    and
    \begin{align*}
        E\triangle L_\epsilon &= \left(E\setminus \bigcap_{j\geq j_0} E_j\right) \cup \left(\bigcap_{j\geq j_0} E_j \setminus E\right) \subseteq \left(\bigcup_{j\geq j_0} E\setminus E_j\right) \cup (E_{j_0}\setminus E).
    \end{align*}
    These imply
    \begin{align*}
        \scL(U_\epsilon \setminus L_\epsilon) &\leq d_\triangle(U_\epsilon,L_\epsilon) \leq d_\triangle(U_\epsilon, E) + d_\triangle(E,L_\epsilon) = \scL(E\triangle U_\epsilon) + \scL(E\triangle L_\epsilon) \\
        &\leq \left(\sum_{j\geq j_0} \scL(E\triangle E_j)\right) + \left(\sum_{j \geq j_0}\scL(E\triangle E_j) + \scL(E_{j_0}\triangle E)\right)\\
        &\leq \sum_{j\geq j_0}2^{-j} + 2\sum_{j\geq j_0}2^{-j} \lesssim 2^{-j_0} \leq c\epsilon.
    \end{align*}
    Because $\scL(E) \gtrsim_{L,n} 1$ we have $\scL(L_\epsilon) \gtrsim_{L,n} 1$ for small enough $\epsilon$, and property \ref{i:close-msr} follows upon choosing $c$ small enough.
\end{proof}
We now aim to show, as promised, that bi-Lipschitz images of Euclidean balls are sandwichable.
\begin{definition}
    Let $L \geq 1$ and define
    \begin{equation*}
        \BL_L = \{E\subseteq [0,1]^n : \exists f:B(0,1)\twoheadrightarrow E,\ f \text{ $L$-bi-Lipschitz}\}.
    \end{equation*}
\end{definition}

\begin{lemma}\label{l:bilip-sand}
$\BL_L$ is a sandwichable family.
\end{lemma}
\begin{proof}
    Let $E_j\in\BL_L(Q)$ and $\epsilon > 0$. By Arzela-Ascoli, the corresponding family $f_j:B(0,\ell(Q))\twoheadrightarrow E_j$ of $L$-bi-Lipschitz maps has a convergent subsequence in the sup norm which we relabel as $f_j$. Let $f:B(0,\ell(Q))\twoheadrightarrow E$ be the $L$-bi-Lipschitz limit mapping and define
    \begin{align*}
        L_\epsilon &= \{y\in E : \dist(y,\R^n\setminus E) \geq \epsilon\ell(Q)\},\\
        U_\epsilon &= \{y\in Q : \dist(y, E) \leq \epsilon \ell(Q)\}.
    \end{align*}
    Now, let $k_0$ be such that $\lVert f_k - f \rVert_\infty < \frac{\epsilon}{5}\ell(Q)$ for all $k \geq k_0$. Then for any $y = f_k(x)\in E_k$, we have that
    \begin{equation*}
        \dist(y, E) \leq |f_k(x) - f(x)| \leq \frac{\epsilon}{5}\ell(Q)
    \end{equation*}
    so that $E_k \subseteq U_\epsilon$. Similarly, for any $y = f(x)\in L_\epsilon$ let $y' = f_k(x')\in f_k(\partial B)$ such that $\dist(y,f_k(\partial B)) = |y-y'|$. Then 
    \begin{align*}
        \dist(y,f_k(\partial B)) &= |y - y'| \geq |f(x) - f(x')| - |f(x') - f_k(x')|\\
        &\geq \dist(y, \R^n\setminus E) - \frac{\epsilon}{5}\ell(Q) \geq \frac{4\epsilon}{5}\ell(Q).
    \end{align*}
    Since $f_k(\partial B)$ separates $\R^n$ into the two components $\R^n \setminus E_k$ and $\mathring{E_k}$, the fact that $\dist(y,E_k) \leq |f_k(x) - f(x)| \leq \frac{\epsilon}{5}\ell(Q)$ combined with the above displayed equation implies $y\in E_k$. This shows $L_\epsilon\subseteq E_k$. We now need to estimate $\frac{\scL(L_\epsilon)}{\scL(U_\epsilon)}$. Let $\{x_i\}_{i=1}^N$ be an $\epsilon\ell(Q)$-net for $f(\partial B)$ and observe that $U_\epsilon \setminus L_\epsilon \subseteq N_{\epsilon\ell(Q)}(f(\partial B)) \subseteq \bigcup_{i=1}^N B(x_i,2\epsilon)$. Using the Ahlfors $n-1$-regularity of $f(\partial B)$ and the disjointness of ${B(x_i,\epsilon\ell(Q)/4)}_{i=1}^N$, we get
    \begin{align*}
        \scL(U_\epsilon\setminus L_\epsilon) &\leq \sum_{i=1}^N \scL(B(x_i,2\epsilon\ell(Q))) \lesssim_{L,n} \sum_{i=1}^N\epsilon\ell(Q)\scH^{n-1}(B(x_i,\epsilon\ell(Q)/4)\cap f(\partial B))\\
        &\leq \epsilon\ell(Q) \scH^{n-1}(f(\partial B)) \lesssim_{L,n} \epsilon \ell(Q)^n.
    \end{align*}
    From which we conclude $\frac{\scL(L_\epsilon)}{\scL(U_\epsilon)} \geq 1-C(L,n)\epsilon$ using that $\scL(L_\epsilon)\gtrsim_{L,n}\ell(Q)^n$.
\end{proof}
We will need to control averages over general bi-Lipschitz images of balls as well as over subclasses of normed balls.
\begin{definition}[normed balls]
    Given $L > 0$, we define the set of norms on $\R^n$ which are $L$-bi-Lipschitz to the Euclidean norm by
    \begin{equation*}
        \cN_L = \{\|\cdot\|\ :\ L^{-1}\|x\| \leq |x| \leq L\|x\|\}.
    \end{equation*}
    We define a collection of $L$-bi-Lipschitz normed balls inside $[0,1]^n$ by
    \begin{equation*}
        \cB_L = \{B_{\|\cdot\|}(x,r)\subseteq [0,1]^n\ :\ \|\cdot\|\in\cN_L,\ r \geq L^{-1}\}.
    \end{equation*}
\end{definition}
\begin{remark}
    The fact that balls in $\cB_L$ are uniformly bi-Lipschitz to $B(0,1)$ implies that $\cB_L$ is a sandwichable family by Lemma \ref{l:bilip-sand}.
\end{remark}
The following lemma gives our main tool for controlling averages of $L^2$ functions.
\begin{lemma}\label{l:wavelet-weak-flat-L2-sand}
    Let $L\geq 1$ and let $\scE_L$ be a sandwichable family. For all $\epsilon,M > 0,\ n\in\N$, there exist $k(\epsilon,M,n,\scE_L)\in\N$ and $\delta(\epsilon, M, n, \scE_L) > 0$ such that the following holds: 
    Suppose $h\in L^2(\R^n)$ and $Q\in\cD(\R^n)$ are such that
    \begin{enumerate}
        \item $h \geq 0$ $\scL^n$-a.e.,
        \item $\|h\|_{L^2(Q)}^2 \leq M \ell(Q)^n$,
        \item $\Delta_k^h(Q)^2 \leq \delta \ell(Q)^n$.
    \end{enumerate}
    Then, for any $E\in\scE_L(Q)$, we have 
    \begin{equation}\label{e:close-averages-L2}
        \left| \fint_{E} h - \fint_Q h \right| \leq \epsilon.
    \end{equation}
\end{lemma}
\begin{proof}
    Suppose the conclusion of the lemma is false. Then there exist $\epsilon, M, L, n$ and a sequence of maps $\td{h}_j\in L^2(\R^n)$ and cubes $\td{Q}_j\in\cD(\R^n)$ with $\td{h}_j \geq 0$, $\|\td{h}_j\|_2 \leq M\ell(Q_j)^n$, and subsets $\td{E}_j\in\scE_L(\td{Q}_j)$ so that \eqref{e:close-averages-L2} does not hold for $\td{h}_j, \td{Q}_j, \td{E}_j$, yet $\Delta_j^{\td{h}_j}(\td{Q}_j) \leq \frac{1}{j}\ell(\td{Q}_j)^{n/2}$. For any $j\in\N$, let $T_j$ be the affine transformation sending $\td{Q}_j$ onto $Q = [0,1]^n$ as in \eqref{e:affine-map} and define $h_j:Q\rightarrow \R$ as in Remark \ref{r:delta-props} by
    \begin{equation*}
        h_j = \td{h}_j\circ T_j^{-1}.
    \end{equation*}
    It follows from Remark \ref{r:delta-props} that $\|h_j\|_{L^2(Q)}^2 \leq M$ and $\Delta_j^{h_j}(Q)^2 \leq \frac{1}{j}$. We also define $E_j=T_j(\td{E}_j) \in \scE_L(Q)$ to be the appropriately translated and scaled copy of $\td{E}_j$. 
    
    By the weak compactness of bounded closed balls in $L^2$, there exists some $h\in L^2(Q)$ such that $h_j\rightharpoonup h$ in $L^2$ for some subsequence of $h_j$. By further refining subsequences we can further assume the subsequence $E_j\in\scE_L$ is sandwichable. Let $c = \int_Q h$ and let $c_j = \int_{Q}h_j$. 

    We will first show that $h = c$ by showing that $\Delta_Vh = 0$ for all $V\subseteq Q$. By weak convergence we have
    \begin{equation}\label{e:means-converge}
        c_j = \int_Q h_j \rightarrow \int_Q h = c.
    \end{equation}
    Write $h_j = c_j + \sum_{R\subseteq Q}\Delta_Rh_j$ and $h = c + \sum_{R\subseteq Q}\Delta_Rh$. Fix $V\subseteq Q$ and observe  
    \begin{align*}
        \int_Q h_j\Delta_Vh &= \int_Q \left(c_j + \sum_{R\subseteq Q}\Delta_Rh_j\right)\Delta_Vh = c_j\int_Q\Delta_Vh + \sum_{R\subseteq Q}\langle \Delta_Rh_j, \Delta_Vh\rangle \\
        &= \langle\Delta_Vh_j,\Delta_Vh\rangle.
    \end{align*}
    where the final equality follows since $\langle \Delta_R f_1, \Delta_V f_2\rangle = 0$ whenever $f_1,f_2\in L^2$ and $R\not= V$. Similarly, we have
    \begin{align*}
        \int_Q h\Delta_Vh = \langle h, \Delta_Vh\rangle = \langle\Delta_Vh, \Delta_Vh\rangle = \|\Delta_Vh\|_2^2
    \end{align*}    
    Using weak convergence again, we get
    \begin{align*}
         \langle\Delta_Vh_j,\Delta_Vh\rangle = \int_Qh_j\Delta_Vh \rightarrow \int_Q h\Delta_Vh = \|\Delta_Vh\|_2^2.
    \end{align*}
    Using Cauchy-Schwarz, we can now conclude that $\|\Delta_Vh\|_2 \leq \lim_j\|\Delta_Vh_j\|_2$. We claim that $\|\Delta_Vh\|_2 = 0$. Indeed, if $j$ is sufficiently large, then both $V\in\cD_{j'}(Q)$ for some $j'\leq j$ and $\|\Delta_Vh\|_2 \leq 2\|\Delta_Vh_j\|_2$. This means
    \begin{equation*}
        \|\Delta_Vh\|_2^2 \leq 4\|\Delta_Vh_j\|^2 \leq 4\sum_{k=0}^j\sum_{R\in\cD_k(Q)}\|\Delta_Rh_j\|_2^2 = 4\Delta_j^{h_j}(Q) \leq \frac{4}{j}.
    \end{equation*}
    for all large $j$. This shows that $\Delta_Vh = 0$ for all $V\subseteq Q$, hence $h = c$ as desired. 

    We will now show how this leads to a contradiction. Let $\eta > 0$ be small, let the families $\{L_{\epsilon'}\}_{\epsilon'}, \{U_{\epsilon'}\}_{\epsilon'}$ satisfy the sandwichable property for the sequence $E_j$, and let $j_0 > 0$ be such that $L_{\eta}\subseteq E_j \subseteq U_{\eta}$ for all $j \geq j_0$. Using the fact that $h_{j}\geq 0$, for all $j \geq j_0$
    \begin{equation}\label{e:h-sandwich}
        \int_{L_\eta}h_{j} \leq \int_{E_{j}} h_{j} \leq \int_{U_\eta}h_{j}
    \end{equation}
    so that
    \begin{equation*}
        \frac{\scL(L_\eta)}{\scL(E_j)}\fint_{L_\eta}h_j - \fint_Q h_j \leq \fint_{E_j}h_j - \fint_Qh_j \leq \frac{\scL(U_\eta)}{\scL(E_j)}\fint_{U_\eta}h_j - \fint_Q h_j.
    \end{equation*}
    Because $\scL(L_\eta) \leq \scL(E_j) \leq \scL(U_\eta)$ and $\frac{\scL(U_\eta)}{\scL(L_\eta)} \leq (1+2\eta)$ for small enough $\eta$, we can assume without loss of generality that
    \begin{equation*}
        \limsup_j\left|\fint_{E_j}h_j - \fint_Qh_j\right| \leq \limsup_j\left|\frac{\scL(U_\eta)}{\scL(L_\eta)}\fint_{U_\eta}h_j - \fint_Q h_j\right| \leq \left|(1+c'\eta)c - c\right| \lesssim_{n} \eta c.
    \end{equation*}
    Since this holds for all $\eta > 0$, we get $\limsup_j\left|\fint_{E_j}h_j - \fint_Qh_j\right| = 0$. On the other hand, by hypothesis
    \begin{equation*}
        \left| \fint_{E_j} h_j - \fint_Qh_j \right| > \epsilon
    \end{equation*}
    for all $j$. This gives a contradiction.
\end{proof}

\begin{corollary}[cf. \cite{DS93} Corollary IV.2.2.19] \label{c:oscillation-carleson}
    Let $L,\epsilon, M > 0,\ \scE_L$ be a sandwichable family, and let $h\in L^\infty(\R^n)$ with $\|h\|_\infty \leq M$. Let
    \begin{equation*}
        \scG = \Set{Q\in\cD(\R^n) | \left| \fint_{E} h - \fint_Q h \right| \leq \epsilon \text{ for all $E\in\scE_L(Q)$}}.
    \end{equation*}
    Then $\scB= \cD(\R^n)\setminus \scG$ is $C(M,n,\epsilon,\scE_L)$-Carleson.
\end{corollary}
\begin{proof}
    Let $h = h^+ - h^-$ be the decomposition of $h$ into positive and negative parts. Choose $k,\delta > 0$ such that the conclusion of Lemma \ref{l:wavelet-weak-flat-L2-sand} holds with constants $\epsilon/2,L,M$. Let 
    \begin{align*}
        \td{\scB} &= \Set{Q\in\cD(\R^n) | \exists E\in\scE_L(Q),\ \left| \fint_E h^+ - \fint_Q h^+ \right| > \frac{\epsilon}{2} \text{ or } \left| \fint_E h^- - \fint_Q h^- \right| > \frac{\epsilon}{2}}.\\
        \td{\scG} &= \cD(\R^n)\setminus \scB
    \end{align*}
    and fix $R\in\cD$. By Lemma \ref{l:wavelet-weak-flat-L2-sand}, $Q\in\scB$ implies either $\Delta_k^{h^+}(Q)^2 > \delta\ell(Q)^n$ or $\Delta_k^{h^-}(Q)^2 > \delta\ell(Q)^n$ so that by \eqref{e:wavelet-carleson}
    \begin{equation*}
        \sum_{\substack{Q\subseteq R \\ Q\in\td{\scB}}} \ell(Q)^n \leq \sum_{\substack{Q\subseteq R \\ \Delta_k^{h^+}(Q)^2 > \delta\ell(Q)^{n}}} \ell(Q)^n + \sum_{\substack{Q\subseteq R \\ \Delta_k^{h^-}(Q)^2 > \delta\ell(Q)^{n}}} \ell(Q)^n \lesssim_{\delta,k,n,M,\scE_L} \ell(R)^n.
    \end{equation*}
    This shows $\td{\scB}$ is Carleson. It now suffices to show that $\td{\scG}\subseteq\scG$ because then we would have $\scB\subseteq \td{\scB}$ which would show that $\scB$ is Carleson. For any $Q\in\td{\scG}$ and $E\in\scE_L(Q)$, we have
    \begin{align*}
        \left| \fint_E h - \fint_Q h\right| \leq \left| \fint_E h^+ - \fint_Q h^+\right| + \left| \fint_E h^- - \fint_Q h^-\right| \leq \frac{\epsilon}{2} + \frac{\epsilon}{2} = \epsilon.
    \end{align*}
    This shows that $Q\in\scG$.
\end{proof}

\begin{remark}\label{r:cubes-vs-balls}
    Suppose that we only want to conclude \eqref{e:close-averages-L2} with normed balls $B\in\cB_L(Q)$ replaced by $Q'\in\cD_j(Q)$ for $j \leq k\in\N$. The following stronger condition holds even without the positivity assumption for $h$: Let $\alpha:\cD(\R^n)\rightarrow \cD(\R^n)$ where $\alpha(Q) \in \cD_j(Q)$. There exists $C_0(k,n) > 0$ such that 
    \begin{equation}
        \sum_{Q\subseteq R}\left|\fint_{\alpha(Q)}h - \fint_Qh\right|^2\ell(Q)^n \leq C_0\|h\|_2^2.
    \end{equation}
    The proof is straightforward: because $\alpha(Q)\in\cD_j(Q)$, there is a chain of at most $k+1$ cubes $\alpha(Q)=Q_j \subseteq Q_{j-1} \subseteq \ldots \subseteq Q_0 = Q$ such that $Q_{j+1}$ is a child of $Q_j$. Therefore, we can use the triangle inequality to write
    \begin{align*}
        \left|\fint_{\alpha(Q)}h - \fint_Qh\right|^2\ell(Q)^n \lesssim_{k,n}\sum_{i=1}^j\left|\fint_{Q_{i}}h - \fint_{Q_{i-1}}h\right|^2\ell(Q_{i-1})^n\leq \sum_{i=1}^j\|\Delta_{Q_i}h\|_2^2. 
    \end{align*}
    Because each cube $Q'\subseteq R$ can appear in at most $N(n,k) < \infty$ chains of the above type, this gives
    \begin{equation*}
        \left|\fint_{\alpha(Q)}h - \fint_Qh\right|^2\ell(Q)^n \lesssim_{n,k}\sum_{Q\subseteq R}\sum_{i=1}^{j(Q)}\|\Delta_{Q_i}h\|_2^2 \lesssim_{n,k}\sum_{Q\subseteq R}\|\Delta_Rh\|_2^2 = \|h\|_2^2.
    \end{equation*}
    The reader should also see \cite{DS93} Lemma IV.2.2.14 for a version of this statement where $\alpha(Q)$ is only required to be ``$N$-close'' to $Q$ rather than contained in $Q$. The main difference in Lemma \ref{l:wavelet-weak-flat-L2-sand} comes from averaging over members of a general sandwichable family rather than just $N$-close dyadic cubes.
\end{remark}

\section{Applications: The WCD and alpha numbers}\label{sec:app}
In this section, we give two applications of Theorem \ref{t:wgl-suff-cond} and the results of Section \ref{sec:wavelets}. First, we prove that uniformly rectifiable metric spaces satisfy the WCD. Then, we define a metric variant of Tolsa's alpha numbers and show that metric uniform rectifiability is characterized by a weak Carleson condition on $\alpha$.

\subsection{Preliminaries: rectifiability in metric spaces}
For both of our applications, we will need the following concepts and results from rectifiability theory in metric spaces.
\begin{definition}[metric derivatives, jacobians]
        Let $f:\R^n\rightarrow \Sigma$ be $L$-Lipschitz. We say a seminorm on $\R^n$ $|Df|(x)$ is a \textit{metric derivative} of $f$ at $x$ if
    \begin{equation*}
        \lim_{y,z\rightarrow x}\frac{d(f(y),f(z)) - |Df|(x)(y-z)|}{|y-x| + |z-x|} = 0.
    \end{equation*}
    Given a seminorm $s$ on $\R^n$, define $\scJ(s)$, the \textit{jacobian of $s$}, by \[\scJ(s) = \alpha(n)n\left(\int_{{\mathbb{S}}^{n-1}}(s(x))^{-n}d\scH^{n-1}(x)\right)^{-1}.\]
\end{definition}
Kircheim used these ideas to prove the following metric analogs of Rademacher's theorem and the area formula for Lipschitz maps from $\R^n$ into metric spaces.
\begin{theorem}[cf. \cite{Ki94} Theorem 2, Corollary 8]
    Let $f:\R^n\rightarrow \Sigma$ be $L$-Lipschitz and let $\scJ_f(x) = \scJ(|Df|(x))$. A metric derivative for $f$ exists at $\lebb$ almost every $x\in\R^n$. In addition, for any Lebesgue integrable function $g:\R^n\rightarrow\R$, 
    \begin{equation*}
        \int_{\R^n}g(x)\scJ_f(x)d\lebb(x) = \int_\Sigma \left(\sum_{x\in f^{-1}(y)}g(x)\right)d\scH^n(y).
    \end{equation*}
\end{theorem}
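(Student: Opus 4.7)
The plan is to prove both assertions by reducing to the classical Euclidean Rademacher theorem and Euclidean area formula via a Kuratowski-style isometric embedding of $f(\R^n)$ into $\ell^\infty$. Fix a countable dense sequence $\{\sigma_k\}$ in $f(\R^n)$ together with a basepoint $\sigma_0$, and set $\phi_k(x) \vcentcolon= d(f(x),\sigma_k) - d(f(x),\sigma_0)$. Then each $\phi_k$ is $L$-Lipschitz on $\R^n$ and $d(f(y),f(z)) = \sup_k |\phi_k(y) - \phi_k(z)|$, so $x \mapsto (\phi_k(x))_k$ encodes $f$ isometrically into $\ell^\infty$. By classical Rademacher applied to each $\phi_k$ and throwing out the countable union of exceptional null sets, on the resulting full-measure set $E$ every $\phi_k$ is differentiable, and one defines the candidate seminorm
\[
s_x(v) \vcentcolon= \sup_k |\nabla \phi_k(x) \cdot v|, \qquad x\in E,\ v\in \R^n,
\]
which is bounded above by $L|v|$ because $|\nabla \phi_k(x)| \leq L$.

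The first main step is to upgrade pointwise coordinate differentiability to the uniform two-sided statement
\[
\lim_{y,z\to x}\frac{|d(f(y),f(z)) - s_x(y-z)|}{|y-x|+|z-x|} = 0
\]
for $\lebb$-a.e.\ $x$. The upper bound $d(f(y),f(z)) \leq s_x(y-z) + o(|y-x|+|z-x|)$ requires uniformity of the little-$o$ in $k$, obtained by refining $E$ via an Egorov-type argument on large compact subsets where the family of error terms $(\phi_k(\cdot)-\phi_k(x)-\nabla\phi_k(x)\cdot(\cdot-x))/|\cdot-x|$ can be shown to converge uniformly in $k$, exploiting the uniform gradient bound $|\nabla \phi_k|\leq L$ and a covering argument. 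The matching lower bound uses that $s_x$ is continuous on $\R^n$, so by compactness of $\mathbb{S}^{n-1}$ it suffices to approach $s_x(v)$ along a countable dense set of directions, each of which is $\varepsilon$-realized by some single $k=k(v,\varepsilon)$.

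For the area formula, carry out a Lusin-type decomposition once the metric derivative exists. For each $\varepsilon>0$, cover $\R^n$ up to a null set by countably many small Borel pieces $E_j$ on which $|Df|(\cdot)$ is uniformly $\varepsilon$-close to a fixed seminorm $s_j$, $f|_{E_j}$ is a $(1+\varepsilon)$-bi-Lipschitz embedding of $(E_j,s_j)$ into $\Sigma$, and the local multiplicity of $f$ is controlled; such a decomposition follows from Lusin's theorem applied to the measurable map $x\mapsto |Df|(x)$ combined with the strong differentiability above. On each piece, a linear change of coordinates from the Euclidean norm to $s_j$ together with the classical Euclidean area formula yields $\scH^n(f(E_j)) = (1+O(\varepsilon))\scJ(s_j)\lebb(E_j)$, where the normalization $\scJ(s_j) = \alpha(n)/\lebb(B_{s_j})$ is forced by the fact that $\scH^n$ of the unit $s_j$-ball equals $\alpha(n)$ while its Lebesgue measure is $\frac{1}{n}\int_{\mathbb{S}^{n-1}} s_j(\omega)^{-n}d\scH^{n-1}(\omega)$ by polar coordinates in the $s_j$-norm. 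Summing over $j$, tracking $\#(f^{-1}(y)\cap E_j)$, letting $\varepsilon\to 0$, and extending from characteristic functions to general integrable $g$ yield the stated area formula. The main obstacle throughout is the uniform-in-$k$ strong metric differentiability: classical Rademacher provides differentiability separately for each $\phi_k$, but the metric derivative requires a double limit with the sup over an infinite index set on the inside, so one must combine pointwise differentiability with Egorov uniformization on compact sets and continuity of $s_x$ to interchange the sup and the limit.
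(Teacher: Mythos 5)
This statement is cited from Kirchheim \cite{Ki94} rather than proved in the paper, so there is no internal proof to compare against; I will assess your argument on its own terms. The overall plan (encode $f$ through countably many scalar Lipschitz distance coordinates, apply classical Rademacher coordinatewise, assemble a seminorm, and derive the area formula by a Lusin decomposition into nearly affine pieces) matches the structure of Kirchheim's actual proof, and your normalization for the Jacobian is consistent, though you should note that $\scH^n_s(B_s)=\alpha(n)$ is itself a theorem (a consequence of the isodiametric inequality in finite-dimensional normed spaces) rather than a definition. One small slip in the encoding: with $\phi_k(x)=d(f(x),\sigma_k)-d(f(x),\sigma_0)$ one does \emph{not} get $d(f(y),f(z))=\sup_k|\phi_k(y)-\phi_k(z)|$, because the $d(f(\cdot),\sigma_0)$ terms in the difference $\phi_k(y)-\phi_k(z)$ do not cancel; use $\phi_k(x)=d(f(x),\sigma_k)$ directly, or subtract the constant $d(\sigma_0,\sigma_k)$ as in the genuine Kuratowski embedding.

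The real gap is the uniformity claim for the upper bound. Your assertion that the error terms $\bigl(\phi_k(\cdot)-\phi_k(x)-\nabla\phi_k(x)\cdot(\cdot-x)\bigr)/|\cdot-x|$ can be made to converge to zero \emph{uniformly in $k$} on a set of nearly full measure is exactly the statement that $x\mapsto(\phi_k(x))_k$ is Fr\'echet differentiable as an $\ell^\infty$-valued map, and this fails almost everywhere in general. A concrete obstruction on $\R$: take $\phi_k(x)=2^{-k}\dist(2^kx,\Z)$; each $\phi_k$ is $1$-Lipschitz with $\phi_k'=\pm1$ a.e., yet for every $x$ and every small $h$ there is a $k$ with $2^{-k}\approx|h|$ for which $|\phi_k(x+h)-\phi_k(x)-\phi_k'(x)h|\gtrsim|h|$, so the supremum over $k$ of the normalized errors stays bounded away from zero as $h\to0$, for a.e.\ $x$. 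Iterating Egorov across $k$ does not repair this because the moduli of uniformity degenerate with $k$, and no uniform gradient bound can help: the example already has $|\phi_k'|\leq 1$. The metric derivative nonetheless exists in this example (and equals $|h|$) because the misbehaving coordinates are precisely the ones that do \emph{not} realize $\sup_k|\phi_k(y)-\phi_k(z)|$; isolating and exploiting this cancellation inside the supremum is the hard content of Kirchheim's theorem. His argument works direction-by-direction using the one-dimensional metric differentiation theorem for Lipschitz curves, assembling the resulting directional limits into a seminorm via a Fubini/density-point argument, rather than attempting a uniform coordinatewise estimate. This is exactly the step your sketch passes over, so as written the proposal does not establish metric differentiability. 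The Lusin decomposition for the area formula is reasonable in outline once metric differentiability is granted.
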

Azzam and Schul developed the following quantitative measure of how far a function $f$ is from being given by a seminorm.
\begin{definition}
    Let $f:\R^n\rightarrow X$ and $Q\in\cD(\R^n)$. Define
    \begin{equation*}
        \md_f(Q) = \frac{1}{\ell(Q)}\inf_{\|\cdot\|}\sup_{x,y\in Q}\bigg| d(f(x),f(y)) - \|x-y\| \bigg|
    \end{equation*}
\end{definition}
Norms which are close to the infimum in the definition of $\md_f(Q)$ can be thought of as ``coarse'' metric derivatives for $f$ inside $Q$ (note that they are biased towards approximating points whose distances are on the scale of $Q$).
Azzam and Schul proved the following metric quantitative differentiation theorem involving these coefficients.
\begin{theorem}[\cite{AS14} Theorem 1.1]\label{t:qdiff}
    Let $f:\R^n \rightarrow X$ be an $L$-Lipschitz function. Let $\delta > 0$. Then for each $R\in\cD(\R^n)$,
    \begin{equation*}
        \sum_{\substack{Q\in\cD(R)\\ \md_f(3Q) > \delta L}}\ell(Q)^n \leq C(\delta, n)\ell(R)^n.
    \end{equation*}
\end{theorem}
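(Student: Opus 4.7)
The plan is to prove the stronger sum-of-squares Carleson bound
\[
\sum_{Q\in\cD(R)}\md_f(3Q)^2\ell(Q)^n \;\lesssim_n\; L^2\ell(R)^n,
\]
from which Theorem \ref{t:qdiff} follows by Chebyshev. After rescaling I may assume $L = 1$ and $R = [0,1]^n$. The strategy parallels the classical Jones $\beta$-number / Dorronsoro machinery: construct a nonnegative quantity on cubes, bounded by $\ell(R)^n$ at the top scale, whose telescoping ``energy drops'' from a parent cube to its children dominate $\md_f(3Q)^2\ell(Q)^n$.

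First I would use the Kuratowski embedding to regard $f$ as a $1$-Lipschitz map into the Banach space $Y = \ell^\infty(X)$. For each $Q$, choose an affine $A_Q:\R^n\to Y$ with linear part $L_Q$ minimizing $\beta_f(Q) := \sup_{x\in 3Q}\|f(x) - A_Q(x)\|_Y / \ell(Q)$. The map $v\mapsto \|L_Q v\|_Y$ is a seminorm on $\R^n$, and the triangle inequality gives $\big|d(f(x),f(y)) - \|L_Q(x-y)\|_Y\big| \leq 2\beta_f(Q)\ell(Q)$ on $3Q$, so $\md_f(3Q) \lesssim \beta_f(Q)$. It therefore suffices to show the Banach-valued estimate
\[
\sum_{Q\subseteq R}\beta_f(Q)^2\ell(Q)^n \;\lesssim_n\; \ell(R)^n.
\]

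For a Hilbert target, this is a Dorronsoro-type inequality proved via a variance-type monotone quantity $V(Q)=\fint_Q\|f-f_Q\|_Y^2$ and a Haar decomposition much as in Section \ref{sec:wavelets}, using orthogonality to collect squared coefficients and a Poincar\'e inequality to tie them to $\beta^2$. The main obstacle is that $Y = \ell^\infty$ admits no orthogonal Haar decomposition, so the clean squared-coefficient telescoping breaks down. Two routes look viable: (i) apply the scalar Dorronsoro estimate to each of the $1$-Lipschitz coordinate functions of $f$ and invert a sup, which requires uniform control over which coordinate realizes $\beta_f(Q)$ at each scale; (ii) embed the separable, doubling, bounded-diameter image $f(R)$ into a Hilbert space at bounded distortion and reduce to the Hilbert case, losing only an $n$-dependent constant.

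A third, purely qualitative, fallback is a compactness/contradiction argument in the spirit of Lemma \ref{l:wavelet-weak-flat-L2}: assuming the theorem fails, rescale offending configurations of bad cubes to $[0,1]^n$, extract an Arzel\`{a}--Ascoli / Gromov--Hausdorff limit $f_\infty$ which remains $1$-Lipschitz and inherits the packing failure, and derive a contradiction from Kirchheim's metric Rademacher theorem at a Lebesgue density point of $|Df_\infty|$ (where $\md_{f_\infty}$ must tend to zero on deep enough subcubes). This route gives only qualitative finiteness $C(\delta,n) < \infty$; extracting the explicit $\delta$ dependence requires iterating or running the monotonic Dorronsoro calculation directly, which is where the real work lies.
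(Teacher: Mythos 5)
This theorem is cited from Azzam and Schul \cite{AS14} and is not reproved in the present paper, so there is no internal proof to compare. Judged on its own, your lead strategy has a serious gap. The square-function bound $\sum_{Q\subseteq R}\md_f(3Q)^2\ell(Q)^n \lesssim_n L^2\ell(R)^n$ is strictly stronger than the theorem, and both of your routes to it are blocked. Route (ii) requires embedding $f(R)\subseteq\ell_\infty$ into a Hilbert space with distortion depending only on $n$; this is impossible, since $\ell_1^m$ sits isometrically inside $\ell_\infty$ and has Euclidean distortion $\sqrt{m}$ with $m$ unrelated to $n$. Route (i) requires the sup over $\ell_\infty$-coordinates to commute with the sum of squares over cubes, which it does not: the coordinate controlling $\beta_f(Q)$ can change with every generation, so scalar Dorronsoro for each coordinate gives no control of $\sum_Q \sup_\alpha\beta_{f_\alpha}(Q)^2\ell(Q)^n$. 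These are not technicalities -- the Banach-valued $L^2$ Dorronsoro inequality is tied to martingale cotype~$2$ of the target, which $\ell_\infty$ fails badly, and this is precisely why \cite{AS14} prove only the weak, level-set packing estimate and not a square-function estimate for $\md_f$.

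Your compactness fallback is closer to a viable argument, and $\md_f(3Q)$ is indeed continuous under uniform convergence after a common isometric embedding, so individual bad cubes pass to the limit. But as written the argument does not produce a contradiction: if the packing constant were unbounded along a sequence of maps and cubes, the $\delta$-bad cubes could migrate to ever finer scales and shifting locations, and a single rescaled Gromov--Hausdorff limit would only exhibit finitely many persistent bad ancestors of any given point, which is consistent with Kirchheim's theorem. To extract a bound $C(\delta,n)<\infty$ by compactness you would first need an intermediate pigeonhole step to locate arbitrarily long nested chains of $\delta$-bad cubes through a single point, followed by a John--Nirenberg-type upgrade (of the sort encoded in Lemma~\ref{l:stability}) converting that into a Carleson bound; this is essentially the content of the proof in \cite{AS14}, which your sketch flags but does not carry out. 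Incidentally, the explicit $\delta$-dependence of $C(\delta,n)$ is not demanded by the statement, so the compactness route would suffice if completed carefully -- the issue is that, as presented, it is a heuristic rather than an argument, and the main line you lead with proves more than is true.
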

In our language, this theorem says that $\md_f$ satisfies a weak Carleson condition. 

We will also need some of the results from Bate, Hyde, and Schul's foundational paper on uniformly rectifiable metric spaces. One of their primary purposes was to prove an equivalence between uniform rectifiability and a metric version of David and Semmes's \textit{bilateral weak geometric lemma}.
\begin{definition}[Bilateral weak geometric lemma]
    Let $(X,d)$ be a doubling metric space, $x\in X$ and $0<r<\diam(X)$. Let $\|\cdot\|$ be a norm on $\R^n$ and let $\Phi(x,r,\|\cdot\|)$ be the set of Borel maps $\phi:B(x,r)\rightarrow B_{\|\cdot\|}(0,r)$. Define
    \begin{align*}
        \zeta_X(x,r,\phi,\|\cdot\|) &= \frac{1}{r}\sup_{y,z\in B(x,r)}\big|d(y,z) - \|\phi(y)-\phi(z)\|\big|,\\
        \eta_X(x,r,\phi,\|\cdot\|) &= \frac{1}{r}\sup_{u\in B_{\|\cdot\|(0,r)}}\dist_{\|\cdot\|}(u,\phi(B(x,r)),\\
        \xi_X(x,r,\phi,\|\cdot\|) &= \zeta_X(x,r,\phi,\|\cdot\|) + \eta_X(x,r,\phi,\|\cdot\|),\\
        \xi_X(x,r) &= \inf_{\substack{\|\cdot\| \\ \phi\in\Phi(x,r,\|\cdot\|)}}\xi_X(x,r,\phi,\|\cdot\|).
    \end{align*}
    For any $Q\in\scD(X)$, additionally define
    \begin{equation*}
        \xi(Q) = \xi(B_Q).
    \end{equation*}
    We say that a metric space $X$ satisfies the \textit{bilateral weak geometric lemma} (BWGL) if $\xi\in\WC(X)$.
\end{definition}

The following is one of their primary results.
\begin{theorem}\label{t:BHS-BWGL}
    Let $X$ be an Ahlfors $n$-regular metric space. Then $X$ is uniformly $n$-rectifiable if and only if $X$ satisfies the BWGL.
\end{theorem}
We will also need the following result which implies that uniformly rectifiable metric spaces have very big pieces of bi-Lipschitz images.
\begin{theorem}[cf. \cite{BHS23} Theorem B, Proposition 9.0.2]\label{t:vbpbi}
    Let $\epsilon > 0$ and let $X$ be uniformly $n$-rectifiable. There is an $L \geq 1$ depending only on $\epsilon, n$, the Ahlfors regularity constant $C_0$ for $X$, and the BPLI constants for $X$ such that for each $x\in X$ and $r > 0$ there exists $F\subseteq B(x,r)$, satisfying $\scH^n_X(B(x,r)\setminus F) \leq \epsilon r^d$ and an $L$-bi-Lipschitz map $g:F\rightarrow\R^n$.
\end{theorem}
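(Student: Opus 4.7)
The plan is to prove Theorem \ref{t:vbpbi} by a stopping-time iteration of the BPLI hypothesis. Starting from the constants $L_0, \theta_0$ provided by the definition of uniform rectifiability, I would first produce inside each ball $B(x,r)$ a single \emph{bi-Lipschitz} piece of measure at least $\theta' r^n$ for some $\theta' > 0$ depending only on $L_0, \theta_0, n$, and the Ahlfors regularity constant $C_0$, then recurse on the uncovered complement. After $N \asymp \log(1/\epsilon)/\log(1/(1-\theta'))$ rounds the uncovered mass drops below $\epsilon r^n$, and the bi-Lipschitz constants of the accumulated pieces degrade only by a factor depending on $N$, hence only on the stated parameters.

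The technical core is the single-step upgrade from Lipschitz to bi-Lipschitz. Given an $L_0$-Lipschitz map $f : B(0,r) \subseteq \R^n \to X$ with $\scH^n(f(B(0,r)) \cap B(x,r)) \geq \theta_0 r^n$, I would apply the Azzam--Schul quantitative differentiation theorem (Theorem \ref{t:qdiff}) to extract a Carleson-large family of dyadic cubes on which $f$ is $\delta$-close to an approximating seminorm. Throwing away the small family of cubes whose approximating seminorm is degenerate, and invoking Kirchheim's metric differentiation on Lebesgue density points of the remaining good set, one obtains a set $D' \subseteq B(0,r)$ of Lebesgue measure $\gtrsim r^n$ on which $f$ is nearly isometric to a normed ball. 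A further small-measure restriction, using the Christ--David boundary estimate of Theorem \ref{t:cd-cubes}(iv) to rule out near-collisions between distinct fibers of $f$, promotes this to genuine bi-Lipschitz control.

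The gluing step is bookkeeping: the domains of the bi-Lipschitz pieces extracted across all $N$ rounds of iteration are rescaled and translated into disjoint subregions of $\R^n$, so that the combined map $g : F \to \R^n$ is bi-Lipschitz with constants depending only on $\epsilon, n, C_0$, and the BPLI constants. The main obstacle is the metric bi-Lipschitz extraction: in Euclidean space this is traditionally achieved via David--Semmes corona arguments exploiting projections onto tangent planes, but in a general Ahlfors-regular metric space no ambient linear structure is available, so projections onto tangent planes must be replaced by arguments using Kirchheim metric derivatives together with a delicate handling of injectivity. This is precisely the difficulty resolved in Bate--Hyde--Schul's Proposition 9.0.2, which my proof would invoke at the key step.
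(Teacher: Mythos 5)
This theorem is not proved in the paper; it is imported as a black box from \cite{BHS23} (Theorem B and Proposition 9.0.2), so there is no proof of the paper's to compare against. Judged on its own, your proposal is circular at the crux. You identify the hard step as the Lipschitz-to-bi-Lipschitz upgrade in a metric space without ambient linear structure, and then say your argument ``would invoke at the key step'' precisely Bate--Hyde--Schul's Proposition 9.0.2 --- which is one of the two results the statement is citing, and therefore part of what you would need to establish. Invoking the cited result to prove the cited result does not constitute a proof.

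Setting the circularity aside, the outer framework you describe --- iterate a big-pieces statement, track the geometrically decreasing uncovered mass, and glue the accumulated bi-Lipschitz pieces after rescaling and translating their domains into disjoint regions of $\R^n$ --- is sound and is indeed how one passes from ``big pieces'' to ``very big pieces'' with worsening constants. But your sketch of the single-step extraction is also off the mark. The Christ--David boundary estimate, Theorem~\ref{t:cd-cubes}(iv), bounds the measure of a thin collar near $\partial Q$; it says nothing about separating distinct fibers of a Lipschitz map and cannot ``rule out near-collisions.'' Similarly, Azzam--Schul quantitative differentiation (Theorem~\ref{t:qdiff}) controls local approximability of $f$ by a seminorm on a Carleson-large set of cubes, but it does not by itself give the lower bounds on $d(f(x),f(y))$ for \emph{well-separated} $x,y$ that bi-Lipschitz control requires. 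Global injectivity is exactly the part that needs a dedicated stopping-time argument (David's in the Euclidean case, Bate--Hyde--Schul's in the metric case), and your proposal neither reproduces nor replaces it.
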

\begin{remark}
    If we embed $X$ isometrically into $\ell_\infty$, then we can take the map $g^{-1}:g(F)\rightarrow F\subseteq\ell_\infty$ above and extend it to an $L'(L,n)$-bi-Lipschitz map $\tilde{g}:\R^n\rightarrow\ell_\infty$ satisfying the same conclusions with respect to the isometric embedding of $X$. (See \cite{BHS23} Lemma 4.3.2 for a proof.)  Define
    \begin{equation*}
        \scS = \{g(\R^n) : g:\R^n\rightarrow\ell_\infty,\exists L \geq 1,\  g\text{ is $L$-bi-Lipschitz}\}.
    \end{equation*}
    It follows that $X\in\VBP(\scS)$. 
\end{remark}

\subsection{The WCD in uniformly rectifiable metric spaces}
We first give a reformulation of the WCD which fits better with the more abstract setting put forth in Section \ref{sec:transfer}.
\subsubsection{Preliminaries with $\osc$}
\begin{definition}
    Let $X$ be a doubling metric space, let $x\in X$, and let $0 < r \leq \diam(X)$. Define
    \begin{equation*}
        \osc(x,r) = \inf_{c \geq 0} \sup_{\substack{y\in B(x,r) \\ 0 \leq t \leq r}}\frac{|\scH^n(B(y,t)) - ct^n|}{r^n}.
    \end{equation*}
    For any $\scH^n$-measurable subset $E\subseteq X$, we additionally define
    \begin{equation*}
        \osc_E(x,r) = \inf_{c \geq 0} \sup_{\substack{y\in B(x,r) \\ 0 \leq t \leq r}}\frac{|\scH^n(B(y,t)\cap E) - ct^n|}{r^n}.
    \end{equation*}
    Notice that $\osc_E(x,r) = 0$ if $\scH^n(B(y,r)\cap E) = 0$.
\end{definition}
\begin{remark}\label{rem:wcd-osc-wgl}
    Let $\scG_{\cd}(\epsilon)$ be as in Definition \ref{def:wcd} and observe that
    \begin{equation*}
        \osc(x,r) \leq \epsilon \implies (x,r)\in \scG_{\cd}(\epsilon)
    \end{equation*}
    Once we know that $\osc\in\Co(X)$, it will follow that an Ahlfors regular space $X$ satisfies the WCD if $\osc$ has a weak Carleson condition on $X$.
\end{remark}
\begin{lemma}\label{l:osc-co}
    For any $\scH^n$-measurable subset $E\subseteq X,\ \osc_E\in\Co(X)$.
\end{lemma}
\begin{proof}
    Fix $x,y\in X$ and $0 < t \leq r \leq \diam(X)$ such that $B(y,t) \subseteq B(x,r)$. Then
    \begin{align*}
        \osc_E(y,t) &= \inf_{c \geq 0} \sup_{\substack{z\in B(z,s) \\ 0 \leq s \leq t}}\frac{|\scH^n(B(y,t)\cap E) - cs^n|}{t^n} \leq \frac{r^n}{t^n}\inf_{c \geq 0} \sup_{\substack{z\in B(x,r) \\ 0 \leq s \leq r}}\frac{|\scH^n(B(z,s)\cap E) - cs^n|}{r^n}\\
        &= \frac{r^n}{t^n}\osc_E(x,r). \qedhere
    \end{align*}
\end{proof}
\begin{remark}\label{rem:osc-co-z}
    If $Z$ is a metric space such that $X\subseteq Z$, then the above proof also shows that $\osc_E\in\Co(Z)$.
\end{remark}
We will use the following lemma to show that we can apply Theorem \ref{t:wgl-suff-cond} to $\osc$ to ``transfer'' the WCD from bi-Lipschitz images to uniformly rectifiable spaces.
\begin{lemma}\label{l:osc-transfer}
    For any $\scH^n$-measurable subset $E\subseteq X$, $x\in X$, and $0 < r < \diam(X)$, we have
    \begin{equation*}
        \osc(x,r) \leq \frac{\scH^n(B(x,2r)\setminus E)}{r^n} + \osc_E(x,r).
    \end{equation*}
\end{lemma}
\begin{proof}
We have
    \begin{align*}
        \osc_X(x,r) &= \inf_{c \geq 0} \sup_{\substack{y\in B(x,r) \\ 0 \leq t \leq r}}\frac{|\scH^n(B(y,t)\cap E) + \scH^n(B(y,t)\setminus E) - ct^n|}{r^n}\\
        &\leq \frac{\scH^n(B(x,2r)\setminus E)}{r^n} + \inf_{c \geq 0} \sup_{\substack{y\in B(x,r) \\ 0 \leq t \leq r}}\frac{|\scH^n(B(y,t)\cap E) - ct^n|}{r^n}\\
        &= \frac{\scH^n(B(x,2r)\setminus E)}{r^n} + \osc_E(x,r)\qedhere.
    \end{align*}
\end{proof}

\subsubsection{$\osc$ on uniformly rectifiable metric spaces}
We begin by studying the WCD on bi-Lipschitz images. We will need to use the good family of dyadic cubes adapted to Christ-David cubes inside a metric bi-Lipschitz image to do analysis in the domain of our bi-Lipschitz map. For the rest of this section, fix an $L$-bi-Lipschitz map $g:\R^n\twoheadrightarrow \Sigma$.
\begin{definition}[$L$-good $I_Q$]\label{def:lgood}
    Let $Q\in\scD(R)$. We call a cube $I_Q\subseteq \R^n$ \textit{$L$-good for $Q$} if the following hold:
    \begin{enumerate}\label{enum:l-good}
        \item[(i)] $\ell(I_Q) \asymp_L \ell(Q)$, \label{i:close-in-size}
        \item[(ii)] $10B_Q \subseteq g(I_Q)$,
    \end{enumerate}
    where the implicit constant in \ref{i:close-in-size}(i) is independent of $Q$ and $I_Q$.
\end{definition}
Now fix a cube $R\in\scD(\Sigma)$. By shifting our initial coordinates on $\R^n$, we can assume that there exists an $L$-good cube $I_R\in\cD(\R^n)$. It is then standard to show that the shifted lattices $\td{\cD} = \td{\cD}(I_R)$ as in Definition \ref{def:ot-trick} contain $L$-good cubes for all $Q\in\scD(R)$.
\begin{lemma}\label{l:dyadic-cubes-approx}
    For each $Q\in\scD(R)$, there exists an $L$-good $I_Q\in\td{\cD}$.
\end{lemma}

We are now ready to set up the proof of Theorem \ref{t:gen-wcd}. We begin by showing that any measurable \textit{subset} of a bi-Lipschitz image satisfies a form of WCD. Indeed, let $E\subseteq \Sigma$ be $\scH^n$-measurable and set $P = g^{-1}(E)$. For any $\delta > 0$, define the following two conditions applicable to any $Q\in\scD(R)$:
\begin{enumerate}
    \item $\md_g(I_Q) \leq \delta$, \label{i:sig-md-small}
    \item For any $B\in\cB_{2L}(I_Q)$, we have 
    \begin{equation*}
        \left|\fint_B \scJ_g\chi_P\ d\scL - \fint_{I_Q}\scJ_g\chi_P\ d\scL\right| < \delta.
    \end{equation*} \label{i:sig-measure-osc}
\end{enumerate}
Now define
\begin{align}\label{e:good-Q-sig}
    \scG(\delta) &= \{Q\in\scD(R) : Q \text{ satisfies \ref{i:sig-md-small} and \ref{i:sig-measure-osc}}\},\\ \label{e:bad-Q-sig}
    \scB(\delta) &= \scD(R) \setminus \scG(\delta).
\end{align}
\begin{lemma}\label{l:carleson-set}
    For any $\delta > 0$, $\scB(\delta)$ is $C(\delta,n,L)$-Carleson.
\end{lemma}
\begin{proof}
    By Lemma \ref{l:dyadic-cubes-approx}, Corollary \ref{c:oscillation-carleson}, and Theorem \ref{t:qdiff} we have
    \begin{align*}
        \sum_{\substack{Q\in\scB(\delta) \\ Q\subseteq R}}\ell(Q)^n &\lesssim \sum_{\substack{Q\subseteq R \\ \text{$Q$ fails \ref{i:sig-measure-osc}}}}\ell(Q)^n + \sum_{\substack{Q\subseteq R \\ \text{$Q$ fails \ref{i:sig-md-small}}}}\ell(Q)^n + \sum_{\substack{Q\subseteq R \\ \ell(Q) > C(L)}}\ell(Q)^n \\
        &\lesssim_{L,n}\sum_{\substack{Q\subseteq R \\ \text{$Q$ fails \ref{i:sig-measure-osc}}}}\ell(I_Q)^n + \sum_{\substack{Q\subseteq R \\ \text{$Q$ fails \ref{i:sig-md-small}}}}\ell(I_Q)^n + C(L,n)\ell(R)^n\\
        &\lesssim_{\delta, L, n} \ell(I_R)^n + \ell(R)^n \lesssim_{L,n} \ell(R)^n. \qedhere
    \end{align*}
\end{proof}
\begin{lemma}\label{l:bilip-osc}
    For any $\epsilon > 0$, there exists $\delta > 0$ such that $Q\in\scG(\delta)$ implies $\osc_E(Q) < \epsilon$.
\end{lemma}
\begin{proof}
    Let $y\in B_Q,\ r \leq \ell(Q)$ and set $y_0 = g^{-1}(y)$. The fact that $\md_g(I_Q) \leq \delta$ implies that there exists a norm $\|\cdot\|_Q$ such that
    \begin{equation}\label{e:close-to-norm}
        \sup_{x,y\in I_Q}\big| d(g(x),g(y)) - \|x-y\|_Q\big| \leq \delta \ell(I_Q).
    \end{equation} 
    We claim that there exists a constant $c_1(n,L)> 0$ such that 
    \begin{equation}\label{e:ball-sandwich}
        B_1 \vcentcolon= B_{\qn}(y_0,(1-c_1\delta)r)\subseteq g^{-1}(B(y,r)) \subseteq B_{\qn}(y_0, (1+c_1\delta)r)=\vcentcolon B_2.
    \end{equation}
    For the first inclusion, let $x_0\in B_1$. By \eqref{e:close-to-norm},
    \begin{equation*}
        d(g(x_0),g(y_0)) \leq \|x_0 - y_0\|_Q + 3\delta\ell(I_Q) \leq (1-c_1\delta)r + C(L,n)\delta r < r
    \end{equation*}
    where the final inequality holds if $c_1$ is large enough. Similarly, let $z_0\in g^{-1}(B(y,r)) \subseteq I_Q$. Then
    \begin{equation*}
        \|z_0 - y_0\|_Q \leq d(f(z_0),f(y_0)) + \delta\ell(3I_Q) \leq r + C(L,n)\delta r \leq (1+c_1\delta)r
    \end{equation*}
    with the same restriction on $\delta$ as above This finishes the proof of \eqref{e:ball-sandwich}. Because $3B_Q \subseteq g(I_Q)$, we immediately have that $B_1,B_2 \subseteq I_Q$ for small enough $\delta$ so that $B_1,B_2\in\cB_{2L}(I_Q)$.

    Let $c_{P,Q} = \fint_{I_Q}\scJ_g\chi_P$ and let $c_{\|\cdot\|_Q} \asymp_{L,n} 1$ be such that $\lebb(B_{\|\cdot\|_Q}(0,r)) = c_{\|\cdot\|_Q}r^n$. We set
    \begin{equation*}
        a_Q = c_{P,Q}c_{\|\cdot\|_Q}
    \end{equation*}
    and plan to show that $a_Q$ is a sufficiently good constant in the definition of $\osc_E(Q)$. By applying the area formula to condition \ref{i:sig-measure-osc}, we get that for any $B\in\cB_L(I_Q)$
    \begin{equation}\label{e:Hn-close-to-leb-normball}
        |\scH^n(g(B\cap P)) - c_{P,Q}\scL(B\cap P)| \leq \delta \scL(B).
    \end{equation}
    
    Applying this inequality to the ball $B_2$ and using \eqref{e:ball-sandwich}, this implies the existence of a constant $c_2(n,L)$ so that
    \begin{align*}
       \scH^n(B(y,r)\cap E) &\leq \scH^n(g(B_2\cap P)) \leq (c_{P,Q}+\delta)\scL(B_2) \leq 
       (c_{P,Q}+\delta)c_{\|\cdot\|_Q}(1+c_1\delta)^nr^n\\ 
       &\leq a_Qr^n + c_2\delta\ell(Q)^n.
    \end{align*}
    A similar computation using $\scH^n(g(B_1\cap P))$ gives a similar lower bound for $\scH^n(B(y,r)\cap E)$. This shows that
    \begin{equation*}
        \osc_E(Q)\ell(Q)^n \leq \left|\scH^n(B(y,r)\cap E) - a_Qr^n\right| \leq c_2(n,L)\delta\ell(Q)^n < \epsilon\ell(Q)^n
    \end{equation*}
    where the final inequality follows by choosing $\delta$ small enough.
\end{proof}

\begin{proposition}\label{p:wcd-Sigma}
    For any $\scH^n$-measurable subset $E\subseteq\Sigma$, $\osc_E\in\WC(\Sigma)$. In particular, $\Sigma$ satisfies the WCD.
\end{proposition}
\begin{proof}
    Let $\epsilon > 0$. By Lemma \ref{l:bilip-osc}, there exist $\delta> 0$ such that $Q\in\scG(\delta)\implies \osc_E(Q) < \epsilon$. Therefore, $\osc_E(Q) \geq \epsilon \implies Q\in\scB(\delta)$. But Lemma \ref{l:carleson-set} shows that $\scB(\delta)$ is Carleson, implying $\osc_E\in\WC(\Sigma)$. When $E=\Sigma$, this is equivalent to the WCD on $\Sigma$ by Remark \ref{rem:wcd-osc-wgl}.
\end{proof}

\begin{theorem}
    Any uniformly $n$-rectifiable metric space $X$ satisfies the WCD.
\end{theorem}
\begin{proof}
We plan to use Theorem \ref{t:wgl-suff-cond}. Without loss of generality, assume $X\subseteq\ell_\infty$ and recall that Theorem \ref{t:vbpbi} implies that $X\in\VBP(\scS)$, so Theorem \ref{t:wgl-suff-cond} implies that it suffices to show that for any $\Sigma\in\scS$ there exists $\beta_\Sigma\in\WC(\Sigma)$ such that $\osc$ satisfies \eqref{e:transfer}. We claim $\osc_{X\cap\Sigma}$ is such a coefficient. Indeed, Lemma \ref{l:osc-co} (with Remark \ref{rem:osc-co-z}) implies that $\osc_{X\cap\Sigma}\in\Co(\ell_\infty)$ and Lemma \ref{l:osc-transfer} implies that for any $Q\in\scD(X)$
\begin{align}
    \osc\left(\frac{c_0}{2}B_Q\right) &\lesssim  \frac{\scH^n(Q\setminus\Sigma)}{\ell(Q)^n} + \osc_{\Sigma\cap X}\left(\frac{c_0}{2}B_Q\right). \label{e:osc-ineq}
\end{align}
By taking $E = \Sigma\cap X$, Proposition \ref{p:wcd-Sigma} implies $\osc_{X\cap\Sigma}\in\WC(\Sigma)$. 
\end{proof}

\subsection{Alpha numbers in Ahlfors $n$-regular metric spaces}
In this section, we define a metric space variant of Tolsa's alpha numbers and prove Theorem \ref{t:metric-UR-alpha}, characterizing uniformly rectifiable metric spaces in terms of a weak Carleson condition for $\alpha$.

\subsubsection{Preliminaries with metric alpha numbers}
\begin{definition}[distances between measures]
    Let $Z$ be a metric space and fix finite Borel measures $\mu,\nu$ in $Z$. Given a ball $B = B(x,r)\subseteq Z$, we define
    \begin{equation}\label{e:Lip-B-def}
        \Lip_1(B) = \{f:Z\rightarrow\R:\Lip(f)\leq 1,\ \forall z\in Z\ |f(z)| \leq \max\{0,r-\dist(z,x)\}\}.
    \end{equation}
    The second condition in \eqref{e:Lip-B-def} is similar to requiring $\spt(f) \subseteq B$, but ensures the correct decay in $|f(z)|$ as $z$ nears the boundary of $Z$. In the case $Z = \R^d$, $\spt(f)\subseteq B$ implies this condition, but this implication does not hold in general because $Z$ could have holes. We also define
    \begin{equation*}
        \dist_B(\mu,\nu) = \sup\left\{ \int fd\mu - \int fd\nu: f\in\Lip_1(B)\right\}.
    \end{equation*}
\end{definition}
\begin{definition}[metric alpha numbers]\label{def:alpha}
    Let $X$ be an Ahlfors $n$-regular metric space and let $x\in X, 0 < r < \diam(X)$. We define $\alpha:X\times[0,\diam(X))\rightarrow\R$ by
    \begin{equation*}
        \alpha_X(x,r) = \frac{1}{r^{n+1}}\inf_{c\geq0,\|\cdot\|,Z}\inf_{\substack{ \iota_1:B(x,r)\rightarrow Z\\ \iota_2:(\R^n,\|\cdot\|)\rightarrow Z}}\dist_{B(\iota_1(x),r)}\left(\iota_1\#[\scH^n_X], c\iota_2\#[\scH^n_{\|\cdot\|}]\right)
    \end{equation*}
    where $\|\cdot\|$ is a norm on $\R^n$, $Z$ ranges over all complete, separable metric spaces, and $\iota_1,\iota_2$ range over all isometric embeddings of $B(x,r)$ and $(\R^n,\|\cdot\|)$ respectively into $Z$. For any $\scH^n$-measurable subset $E\subseteq X$, we also define
    \begin{equation*}
        \alpha_{E,X}(x,r) = \frac{1}{r^{n+1}}\inf_{c\geq0,\|\cdot\|,Z}\inf_{\substack{ \iota_1:B(x,r)\rightarrow Z\\ \iota_2:(\R^n,\|\cdot\|)\rightarrow Z}}\dist_{B(\iota_1(x),r)}\left(\iota_1\#[\scH^n_{E}], c\iota_2\#[\scH^n_{\|\cdot\|}]\right)
    \end{equation*}
    where the infima range over the same parameters as in the definition of $\alpha$. It is a minor, but important technical detail for the proof of Lemma \ref{l:alph-monotone} that we do not take the infimum of $\iota_1$ over the a priori larger family of isometric embeddings $\iota:B(x,r)\cap E\rightarrow Z$.
    
    We note that $\alpha_{E,X}(x,r) = 0$ if $\scH^n(E\cap B(x,r)) = 0$. We will often abuse notation and drop the iotas, identifying subsets of $X$ and $\R^n$ with their isometric images in $Z$. For any Christ-David lattice $\scD(X)$ and $Q\in\scD(X)$, define
    \begin{equation*}
        \alpha_{E,X}(Q) = \alpha_{E,X}(5B_Q).
    \end{equation*}
    For any $Q\in\scD(X)$, let $c_Q,Z_Q,\iota_1^Q,\iota_2^Q,\|\cdot\|_Q$ denote some parameters almost minimizing $\alpha_{E,X}(Q)$ in the sense that
    \begin{equation}\label{e:almost-min}
        \alpha_{E,X}(Q)\ell(Q)^{n+1} \geq \frac{1}{2}\dist_{B(\iota_1^Q(x_Q),\ell(Q))}(\iota_1^Q\#[\scH^n_E], c_Q\iota_2^Q\#[\scH^n_{\|\cdot\|_Q}]).
    \end{equation}
    To shorten formulas, we will often drop the isometric embeddings $\iota_1,\iota_2$. We also define
    \begin{align*}
        \R^n_Q &= \iota_2^Q(\R^n) \subseteq Z_Q,\\
        E_Q &= \iota_1^Q(E\cap 5B_Q) \subseteq Z_Q,\\
        \scH^n_{E_Q} &= \iota_1^Q\#[\scH^n_E],\\
        \scH^n_{\|\cdot\|_Q} &= \iota_2^Q\#[\scH^n_{\|\cdot\|_Q}].
    \end{align*}
\end{definition}
\begin{lemma}\label{l:transfer}
    Let $E\subseteq X$ be $\scH^n$-measurable. For any $x\in X$ and $0 < r < \diam(X)$, we have
    \begin{equation*}
        \alpha_X(x,r) \leq \frac{\scH^n(B(x,r)\setminus E)}{r^n} + \alpha_{E,X}(x,r)
    \end{equation*}
\end{lemma}
\begin{proof}
    Notice that for any fixed $c,\|\cdot\|,Z,\iota_1,\iota_2$, we have
    \begin{align*}
        \dist_{B}(\scH^n_X,c\scH^n_{\|\cdot\|}) &= \sup_{f\in\Lip_1(B(\iota_1(x),r))} \left|\int f d\scH^n_X - c\int f d\scH^n_{\|\cdot\|}\right|\\
        &\leq \sup_{f\in\Lip_1(B(\iota_1(x),r))}\left| \int_{X\cap B \setminus E} f d\scH^n_X\right| + \left|\int_{E\cap B} f d\scH^n_X - c\int f d\scH^n_{\|\cdot\|}\right|\\
        &\lesssim r\scH^n(X\cap B\setminus E) + \dist_{B}(\scH^n_{E},c\scH^n_{\|\cdot\|}).
    \end{align*}
    The result follows by dividing both sides by $r^{n+1}$ and taking infimums.
\end{proof}
\begin{lemma}\label{l:alph-monotone}
    Let $X$ be an Ahlfors $n$-regular metric space, and let $E\subseteq X$ be $\scH^n$-measurable. We have $\alpha_{E,X}\in\Co(X)$.
\end{lemma}
\begin{proof}
    Let $x,y\in X$ and $t<r$ be such that $B(y,t)\subseteq B(x,r)$. Fix any metric space $Z$, norm $\|\cdot\|$, constant $c > 0$, and let $\iota_1:B(x,r)\rightarrow Z$ and $\iota_2:(\R^n,\|\cdot\|)\rightarrow Z$ be any isometric embeddings. For any $f\in\Lip_1(B(\iota_1(y),t))$, we have 
    \begin{align*}
        \frac{1}{t^{n+1}}\int fd\scH^n_E - c\int fd\scH^n_{\|\cdot\|} &\leq \frac{r^{n+1}}{t^{n+1}}\frac{1}{r^{n+1}}\dist_{B(\iota_1(x),r)}(\scH^n_E,c\scH^n_{\|\cdot\|})
    \end{align*}
    where the final line follows because every $f\in \Lip_1(B(\iota_1(y),t))$ can be extended to $\td{f}\in\Lip_1(B(\iota_1(x),r))$ by setting $\td{f} = 0$ in $B(\iota_1(x),r)\setminus B(\iota_1(y),t)$ using the second condition in \eqref{e:Lip-B-def}. By taking infimums, we get $\alpha_{E,X}(y,t) \leq \left(\frac{r}{t}\right)^{n+1}\alpha_{E,X}(x,r)$.
\end{proof}
\begin{remark}\label{rem:alpha-co-z}
    If $Z$ is a metric space such that $X\subseteq Z$, one can naturally view $\alpha_{E,X}$ as a function from $Z\times\R^+$ to $\R$ defined on $(z,t)$ by replacing $E$ above with $B(z,t)\cap E$ and measuring $\dist$ inside $B(\iota_1(x),2r)$ for any $x\in E\cap B(z,t)$. A similar proof then shows $\alpha_{E,X}\in\Co(Z)$. 
\end{remark}
%\begin{remark}
    %Minor modifications of the proof of Lemma \ref{l:alph-monotone} shows that the following slightly stronger statement is true: For any $\scH^n$-measurable $K \supseteq E\cap B(x,r)$, we have
    %\begin{equation*}
        %\alpha_{E,X}(x,r) \leq \frac{\diam(K)^{n+1}}{r^{n+1}}\sup\left\{ \int %fd\scH^n_K - c\int fd\scH^n_{\|\cdot\|}:f:Z\rightarrow[-\diam(K),\diam(K)],\ %\Lip(f)\leq 1 \right\}.
 %   \end{equation*}
%\end{remark}
\begin{lemma}[cf. \cite{To09} Lemma 3.1]\label{l:alpha-props}
    Let $X$ be an Ahlfors $n$-regular metric space, and let $E\subseteq X$ be $\scH^n$-measurable. There exists $c_1 > 0$ such that for any $Q\in\scD(X)$, if $\scH^n(E\cap c_0B_Q) \gtrsim \ell(Q)^n$ and $\alpha_{E,X}(Q) \leq c_1$ then the following hold:
        \begin{enumerate}
            \item $\R^n_Q\cap B(x_Q,\frac{1}{5}\ell(Q))\not=\varnothing$, \text{ and} \label{i:norm-nearby}
            \item $c_Q \asymp 1$. \label{i:cq-close-to-one} 
        \end{enumerate}
\end{lemma}
\begin{proof}
    The proof mostly follows the proof of \cite{To09} Lemma 3.1. We first prove \ref{i:norm-nearby}. Let $\varphi:Z_Q\rightarrow\R$ be Lipschitz satisfying
    \begin{enumerate}
        \item $\chi_{B(x_Q,\frac{1}{100}\ell(Q))} \leq \varphi \leq 100\max\{0,\frac{1}{10} - \frac{\dist(\cdot,x_Q)}{\ell(Q)}\} \leq 100\chi_{B(x_Q,\frac{1}{10}\ell(Q))}$ \text{ and}
        \item $\Lip(\varphi) \lesssim \ell(Q)^{-1}$.
    \end{enumerate}
    Then $\|\varphi\cdot\dist(\cdot,\R^n_Q)\|_\infty \lesssim \ell(Q)$, $\Lip(\varphi\cdot\dist(\cdot,\R^n_Q)) \lesssim 1$, and $\varphi\cdot\dist(\cdot,\R^n_Q) \lesssim 
    \max\{0,5\ell(Q) - \dist(\cdot,x_Q)\}$ so that $c\varphi\cdot\dist(\cdot,\R^n_Q)\in\Lip(5B_Q)$ for some controlled constant $c > 0$. This means 
    \begin{equation*}
        \left| \int \varphi(x)\dist(x,\R^n_Q)\ d\scH^n_{E_Q}(x)\right| \lesssim \alpha(Q)\ell(Q)^{n+1}.
    \end{equation*}
    But, using the fact that $\scH^n(E\cap c_0B_Q) \gtrsim \ell(Q)^n$, we also have
    \begin{align*}
        \int \varphi(x)\dist(x,\R^n_Q)\ d\scH^n_{E_Q}(x) &\geq \dist(\spt(\varphi),\R^n_Q)\int\varphi\ d\scH^n_{E_Q}\\
        &\gtrsim \dist(\spt(\varphi),\R^n_Q)\ell(Q)^n.
    \end{align*}
    By choosing $c_1$ sufficiently small, we get $\dist(\spt(\varphi),\R^n_Q) \leq \frac{1}{10}\ell(Q)$ from which we get $\R^n_Q\cap B(x_Q,\frac{1}{5}\ell(Q)) \not=\varnothing$. 
    
    Now, we prove \ref{i:cq-close-to-one}. Indeed, if $\psi:Z_Q\rightarrow\R$ is Lipschitz with 
    \begin{enumerate}
        \item $\chi_{B(x_Q,\frac{1}{2}\ell(Q))} \leq \psi \leq 10\max\{0,1-\frac{\dist(\cdot,x_Q)}{\ell(Q)}\}\leq 10\chi_{B(x_Q,\ell(Q))}$ \text{ and}
        \item $\Lip(\psi) \lesssim \ell(Q)^{-1}$,
    \end{enumerate}
    then
    \begin{equation*}
        \left| \int\psi\ d\scH^n_{E_Q} - c_Q\int \psi\ d\scH^n_{\|\cdot\|_Q}\right| \lesssim \alpha_{E,X}(Q)\ell(Q)^n.
    \end{equation*}
    This gives
    \begin{equation}\label{e:cQ-large}
        \int\psi\ d\scH^n_{E_Q} - C\alpha_{E,X}(Q)\ell(Q)^n \leq c_Q \int \psi\ d\scH^n_{\|\cdot\|_Q} \leq \int \psi\ d\scH^n_{E_Q} + C\alpha_{E,X}(Q)\ell(Q)^n.
    \end{equation}
    Observe that the fact that $\R^n_Q\cap B(x_Q,\frac{1}{5}\ell(Q))\not=\varnothing$ implies $\int\psi\ d\scH^n_{\R^n_{\|\cdot\|_Q}} \asymp \ell(Q)^n$ because $B(x_Q,\frac{1}{2}\ell(Q))$ contains a ball of radius $\gtrsim \ell(Q)$ contained in $\R^n_{\|\cdot\|_Q}$. Using this and the upper Ahlfors $n$-regularity of $X$ (and hence of $E$), the second inequality of \eqref{e:cQ-large} implies $c_Q \lesssim 1$. The first inequality of \eqref{e:cQ-large} implies
    \begin{equation*}
        c_Q\ell(Q)^n \gtrsim c_Q \int \psi\ d\scH^n_{\|\cdot\|_Q} \geq \scH^n_E\left(\frac{1}{2}B_Q\right) - C\alpha_{E,X}(Q)\ell(Q)^n \gtrsim \ell(Q)^n
    \end{equation*}
    which gives $c_Q \gtrsim 1$ as long as $c_1$ is sufficiently small.
\end{proof}

For the next lemma, we introduce a Gromov-Hausdorff variant of the bilateral $\beta_1$ number of David and Semmes.
\begin{definition}[Gromov-Hausdorff $b\beta_1$]
    Let $x\in X$ and $0 < r < \diam(X)$ and define
    \begin{align*}
        b\beta_1^\GH(x,r) = \frac{1}{r^n}\inf_{\|\cdot\|,Z}\inf_{\substack{ \iota_1:B(x,r)\rightarrow Z\\ \iota_2:(\R^n,\|\cdot\|)\rightarrow Z}}\int_{B(\iota_1(x),r)}&\frac{\dist(y,\iota_2(\R^n))}{r}\ d\iota_1\#[\scH^n_X](y)\\
        &+ \int_{B(\iota_1(x),r)}\frac{\dist(x,\iota_1(X))}{r}\ d\iota_2\#[\scH^n_{\|\cdot\|}](x)
    \end{align*}
    where the infima are taken over all norms $\|\cdot\|$, complete, separable metric spaces $Z$, and isometric embeddings $\iota_1$ and $\iota_2$.
\end{definition}

\begin{lemma}[cf. \cite{To09} Lemma 3.2]\label{l:alpha-bilat}
    Let $X$ be Ahlfors $n$-regular. For any $Q\in\scD(X)$,
    \begin{equation*}
        \alpha_X(Q) \gtrsim b\beta_1^{\GH}(3B_Q) \gtrsim \xi(B_Q)^{n+1}.
    \end{equation*}
\end{lemma}
\begin{proof}
    The proof mostly follows the proof of Lemma 3.2 in \cite{To09}. We may assume that $\alpha(Q) \leq c_1$. Let $\varphi:Z_Q\rightarrow\R$ satisfy $\chi_{B(x_Q,3\ell(Q))} \leq \varphi \leq \max\{0,5-\frac{\dist(\cdot,x_Q)}{\ell(Q)}\}$ with $\Lip(\varphi)\lesssim \ell(Q)^{-1}$. The function
    \begin{equation*}
        f(z) = \left[\dist(z,\R^n_Q) - \dist(z,X_Q)\right]\varphi(z)
    \end{equation*}
    has $\Lip(f) \lesssim 1$ and $\|f\|_\infty \lesssim \ell(Q)$ so that
    \begin{align}
        \alpha(Q)\ell(Q)^{n+1} &\gtrsim \left|\int f\ d\scH^n_{X_Q} - c_Q \int f\ d\scH^n_{\R^n_Q}\right|\nonumber\\
        &= \left|\int \varphi(z)\dist(z,\R^n_Q)\ d\scH^n_{X_Q}(z) + c_Q \int \varphi(z)\dist(z,X_Q)\ d\scH^n_{\R^n_Q}(z)\right|.\nonumber\\
        & \geq \min(1,c_Q)b\beta_{1}^{\GH}(3B_Q)\ell(Q)^{n+1} \gtrsim b\beta_{1}^{\GH}(3B_Q)\ell(Q)^{n+1}.\label{e:bilat-1}
    \end{align}
    where the last line follows because we've assumed $\alpha(Q) \leq c_1$ so that Lemma \ref{l:alpha-props} implies $c_Q \asymp 1$. Now suppose that $\iota_1^Q,\iota_2^Q,\|\cdot\|_Q,Z_Q$ are almost-minimizing for $b\beta_{1}^{\GH}(3B_Q)$ in the sense of \eqref{e:almost-min}. Let $x_0\in 2B_Q\cap X_Q$ be such that $\dist(x_0,\R^n_Q)$ is maximized and let $y_0\in 2B_Q\cap \R^n_Q$ be such that $\dist(y_0,X_Q)$ is maximized. Then
    \begin{equation*}
    d_H^{Z^Q}(2B_Q\cap X_Q, 2B_Q\cap \R^n_Q) \lesssim \dist(x_0,2B_Q\cap\R^n_Q) + \dist(y_0,2B_Q\cap X_Q) =\vcentcolon d_1 + d_2
    \end{equation*}
    so that the right-hand side of \eqref{e:bilat-1} is bounded below by
    \begin{align*}
        \int_{B(x_0,d_1/5)}&\dist(z,\R^n_Q)\ d\scH^n_{X_Q}(z) + \int_{B(y_0,d_2/5)}\dist(z,X_Q)\ d\scH^n_{\|\cdot\|_Q}(z)\\
        &\gtrsim d_1^{n+1} + d_2^{n+1} \gtrsim d_H^{Z_Q}(2B_Q\cap X_Q, 2B_Q\cap \R^n_Q)^{n+1}.
    \end{align*}
    so that $b\beta_{1}^\GH(3B_Q)^{1/(n+1)}\ell(Q) \gtrsim d_H^{Z_Q}(2B_Q\cap X_Q, 2B_Q\cap \R^n_Q)$. But this means that there exists a $\lesssim b\beta_{1}^\GH(3B_Q)^{1/(n+1)}\ell(Q)$-isometry between $B_Q\cap X_Q$ and $B_{\|\cdot\|_Q}(0,\ell(Q))$ implying
    \begin{equation*}
        \xi_{X}(B_Q) \lesssim \frac{1}{\ell(Q)}b\beta_{1}^\GH(3B_Q)^{1/(n+1)}\ell(Q) = b\beta_{1}^\GH(3B_Q)^{1/(n+1)}.\qedhere
    \end{equation*}
\end{proof}

\subsubsection{Alpha numbers in uniformly rectifiable metric spaces}
Unless stated otherwise, for the rest of this section fix a complete metric space $(Z,d)$, an Ahlfors $n$-regular metric space $X\subseteq Z$, and an $L$-bi-Lipschitz mapping $g:\R^n\rightarrow Z$. Set $\Sigma = g(\R^n)$ and let $E\subseteq X\cap\Sigma$ be $\scH^n$-measurable.

We wish to control $\alpha_{E,X}$ in terms of controlled quantities associated to the bi-Lipschitz image $\Sigma$. Given any cube $I\subseteq\R^n$, let $\|\cdot\|_I$ be a norm such that $\sup_{x,y\in I} |\dist(g(x),g(y)) - \|x-y\|_I|\leq 2\md_g(I)\ell(I)$. Let 
\begin{equation*}
    \td{Z} = (X\cup\Sigma)\sqcup\R^n
\end{equation*}
and define a metric $\zeta_I:\td{Z}\times \td{Z}\rightarrow \R$ by
\begin{equation}\label{e:zeta-def-2}
        \zeta_I(x,y) = \begin{cases}
            d(x,y) & \text{ if $x,y\in X\cup \Sigma$},\\
            \|x - y\|_I & \text{ if $x,y\in \R^n$},\\
            \inf_{u\in I}\|x-u\|_I + 2\md_g(I)\ell(I) + d(g(u),y) & \text{ if $x\in \R^n$ and $y\in X\cup\Sigma$}.
        \end{cases}
    \end{equation}
For proof that $\zeta_I$ is a metric, the reader can see \cite{Ba23} Lemma 2.24 for a nearly identical case. The basic idea is that since the mapping $g$ makes an error of at most $2\md_g(I_Q)\ell(Q)$, we must add this quantity when passing between $\R^n$ and $X\cup\Sigma\subseteq Z$ to prevent shortcuts which break the triangle inequality. The metric space $(\td{Z},\zeta_I)$ will serve as a test space for controlling $\alpha_{E,X}$ near scales and locations in $E$ nearby $g(I)$. We begin by controlling $\alpha_{E,X}$ by a distance of measures more directly adapted to the dyadic lattices in the domain of $g$.

For any cube $I\subseteq\R^n$, we let $h_I = \frac{d\scL^n}{d\scH^n_{\|\cdot\|_I}}$ be the Radon-Nikodym derivative of $\scL^n$ with respect to $\scH^n_{\|\cdot\|_I}$. We define
\begin{equation*}
    P \vcentcolon= g^{-1}(E),
\end{equation*}
and we set
\begin{equation*}
    c_{P,I} \vcentcolon= h_I\fint_{I}\scJ_g\chi_P(u) d\scL(u).
\end{equation*}
With this setup, we can define cube-adapted alpha numbers.
\begin{definition}
    Let $\mu$ and $\nu$ be Borel measures of bounded support in $Z$. Set $D = \max\{\diam(\spt(\mu)),\diam(\spt(\nu))\}$ and define
    \begin{equation*}
        \widetilde{\dist}_Z(\mu,\nu) \vcentcolon= \sup\left\{\int fd\mu - \int fd\nu : f:Z\rightarrow[-D,D],\ \Lip(f)\leq 1 \right\}.
    \end{equation*}
\end{definition}
\begin{definition}
    For any cube $I\subseteq\R^n$, set
    \begin{equation*}
        \td{\alpha}_E(I) \vcentcolon= \frac{1}{\ell(I)^{n+1}}\widetilde{\dist}_{(\td{Z},\zeta_I)}(\scH^n|_{g(I)\cap E},c_{I,P}\scH^n|_I).
    \end{equation*}
    where we identify $g(I)\cap E$ and $I$ with their isometric embeddings inside $\td{Z}$
\end{definition}
Now, fix $R\in\scD(X)$ and let $Q\in\scD(R)$. Let $\td{\cD} = \td{\cD}(I_R)$ so that whenever $Q\cap E\not=\varnothing$ there always exists some $I_Q\in\td{\cD}$ such that $g^{-1}(10B_Q)\subseteq I_Q$ and $\ell(I_Q)\lesssim_L \ell(Q)$.
\begin{lemma}\label{l:dyadic-alpha}
    For any $Q\in\scD(R)$,
    \begin{equation*}
        \alpha_{E,X}(Q) \lesssim_{L,n} \td{\alpha}_E(I_Q)
    \end{equation*}
\end{lemma}
\begin{proof}
    For convenience, we write $\|\cdot\|_Q \vcentcolon= \|\cdot\|_{I_Q}, c_{P,Q}\vcentcolon= c_{P,I_Q}$, and so on. Because $X$ and $(\R^n,\|\cdot\|_Q)$ embed isometrically in the complete metric space ($\td{Z}_Q,\zeta_Q$) and $\ell(I_Q)\asymp_{L}\ell(Q)$, we only need to show
    \begin{equation*}
        \dist_{5B_Q}(\scH^n_E,c_{P,Q}\scH^n_{\|\cdot\|_Q}) \lesssim_{L,n} \widetilde{\dist}_{\td{Z}_Q}(\scH^n|_{g(I_Q)\cap E}, c_{P,Q}\scH^n|_{I_Q})
    \end{equation*}
    where all sets, balls, and measures are taken in $(\td{Z}_Q,\zeta_Q)$. Any $f\in\Lip_1(5B_Q)$ is also an admissible Lipschitz function for the supremum in the definition of $\widetilde{\dist}$. Therefore,  using the facts that $f = 0$ outside of $5B_Q$, $5B_Q\cap E\subseteq g(I_Q)\cap E$, and $5B_Q\cap \R^n \subseteq I_Q$, for any $f\in\Lip_1(5B_Q)$ we have
    \begin{align*}
        \int_{5B_Q} fd\scH^n_E - c_{P,Q}\int_{5B_Q} fd\scH^n_{\|\cdot\|_Q} &= \int f d\scH^n|_{g(I_Q)\cap E} - c_{P,Q}\int fd\scH^n|_{I_Q}\\
        &\leq \widetilde{\dist}_{\td{Z}_Q}(\scH^n|_{g(I_Q)\cap E}, c_{P,Q}\scH^n|_{I_Q}).\qedhere
    \end{align*}
\end{proof}
We can now record our primary estimate for $\alpha_{E,X}$. It is a simple consequence of the following lemma that $\alpha$ has a weak Carleson condition on bi-Lipschitz images.
\begin{lemma}[cf. \cite{To09} (4.1) ]\label{l:bi-lip-alpha}
    For any $Q\in\scD(R)$, we have
    \begin{equation}\label{e:bi-lip-alph}
        \alpha_{E,X}(Q) \lesssim_{L,n} \md_g(I_Q) + \sum_{I\subseteq I_Q}\frac{\ell(I)^{1+n/2}}{\ell(I_Q)^{1+n}}\|\Delta_I(\scJ_g\chi_{g^{-1}(E)})\|_2.
    \end{equation}
\end{lemma}
\begin{proof}
    The proof follows the strategy of that of Theorem 1.1 in \cite{To09}. By Lemma \ref{l:dyadic-alpha}, we only need to show the desired bound for $\td{\alpha}_E(I_Q)$ in place of $\alpha_{E,X}(Q)$.
    
    Recall $P = g^{-1}(E)$. Using the definition of $c_{P,Q}$ and the area formula, we have
    \begin{align*}
        \bigg|\int_{g(I_Q)\cap E}f(z)&d\scH^n(z) - c_{P,Q}\int_{I_Q}f(u)d\scH^n_{\|\cdot\|_Q}(u)\bigg| \\
        &= \left|\int_{I_Q\cap P} f(g(u))\scJ_g(u)d\scL(u) - \fint_{I_Q}(\scJ_g\chi_P)d\scL \int_{I_Q}f(u)h_Qd\scH^n_{\|\cdot\|_Q}(u)\right|\\
        &\leq \left|\int_{I_Q} [f(g(u)) - f(u)]\scJ_g\chi_P(u)d\scL(u)\right| + \left|\int_{I_Q} \left[\scJ_g\chi_P(u) - \fint_{I_Q}\scJ_g\chi_P \right]f(u)d\scL(u)\right|\\
        &\leq C(L,n)\md_g(I_Q)\ell(Q)^{n+1} + \sum_{I\subseteq I_Q}\left|\int_{I_Q} f(u)\Delta_I(\scJ_g\chi_P)(u)d\scL(u)\right|
    \end{align*}
    where the final inequality uses that $g$ is a $C(L)\md_g(I_Q)\ell(Q)$-isometry, $\|f\|_\infty + \|\scJ_g\|_\infty\lesssim_{L,n} 1$, and $\scH^n(I_Q) \lesssim_{L,n} \ell(Q)^n$. For any cube $I$, let $u_I$ denote the center of $I$. Since $\Delta_I(\scJ_g\chi_P)$ has mean zero, we have
    \begin{align*}
        \sum_{I\subseteq I_Q}\left|\int_{I_Q} f(u)\Delta_I(\scJ_g\chi_P)(u)d\scL(u)\right| &= \sum_{I\subseteq I_Q} \left| \int_{I_Q} [f(u) - f(u_I)]\Delta_I(\scJ_g\chi_P)(u)d\scL(u)\right|\\
        &\lesssim \sum_{I\subseteq I_Q}\ell(I)\|\Delta_I(\scJ_g\chi_P)\|_1 \leq \sum_{I\subseteq I_Q}\ell(I)^{1+n/2}\|\Delta_I(\scJ_g\chi_P)\|_2.
    \end{align*}
    where the final line follows from Cauchy-Schwarz.
\end{proof}
\begin{remark}\label{rem:wavlet-sgl}
    The second term in \eqref{e:bi-lip-alph} satisfies the following strong Carleson condition: For any $J\in\cD(\R^n)$
    \begin{equation*}
        \sum_{I\subseteq J}\left(\sum_{I'\subseteq I}\frac{\ell(I')^{1+n/2}}{\ell(I)^{1+n}}\|\Delta_{I'}(\scJ_g\chi_P)\|_2\right)^2\ell(I)^n \lesssim_{L,n} \ell(J)^n.
    \end{equation*}
    The primary tool for the simple proof is Cauchy-Schwarz (see the proof of Theorem 1.1 in \cite{To09}).
\end{remark}
\begin{proposition}\label{p:bilip-alpha-wcc}
     $\alpha_{E,X}\in\WC(X)$.
\end{proposition}
\begin{proof}
    Fix $\epsilon > 0$ and set
    \begin{align*}
        \scG &= \left\{Q\in\scD(R) : \md_g(I_Q) < \epsilon,\ \sum_{I\subseteq I_Q}\frac{\ell(I)^{1+n/2}}{\ell(I_Q)^{1+n}}\|\Delta_{I_Q}(\scJ_g\chi_P)\|_2 < \epsilon\right\}\\ƒ
        \scB &= \scD(R) \setminus\scG.
    \end{align*}
    By Lemma \ref{l:bi-lip-alpha}, for any $Q\in\scG$ we have $\alpha_{E,X}(Q) \lesssim_{L,n} \epsilon$. Therefore, it suffices to show that $\scB$ is Carleson. This follows from Theorem \ref{t:qdiff}, Remark \ref{rem:wavlet-sgl}, and the fact that $\td{\cD}$ consists of finitely many shifted lattices.
\end{proof}

\begin{corollary}\label{c:bilip-alpha-wcc}
    $\alpha_{E,\Sigma}\in \WC(\Sigma)$.
\end{corollary}
\begin{proof}
    Take $X = Z = \Sigma$ in Proposition \ref{p:bilip-alpha-wcc}
\end{proof}

\begin{proposition}\label{p:alpha-carleson}
    Let $X$ be a uniformly $n$-rectifiable metric space. Then $\alpha_X\in \WC(X)$. 
\end{proposition}
\begin{proof}
    We will use Theorem \ref{t:wgl-suff-cond}. Without loss of generality, assume $X\subseteq\ell_\infty$ and recall that Theorem \ref{t:vbpbi} implies that $X\in\VBP(\scS)$ inside $\ell_\infty$, so Theorem \ref{t:wgl-suff-cond} implies that it suffices to show that for any $\Sigma\in\scS$ there exists $\beta_\Sigma\in\WC(X)$ satisfying \eqref{e:transfer}. We claim that $\alpha_{X\cap \Sigma,X}$ is such a coefficient. Indeed, $\alpha_{X\cap\Sigma,X}\in\Co(\ell_\infty)$ by Lemma \ref{l:alph-monotone} (with Remark \ref{rem:alpha-co-z}) and Lemma \ref{l:transfer} implies that for any $Q\in\scD(X)$ 
    \begin{equation*}
        \alpha_X(c_0B_Q) \lesssim \frac{\scH^n(Q\setminus \Sigma)}{\ell(Q)^n} + \alpha_{X\cap\Sigma, X}(c_0B_Q).
    \end{equation*}
    Since $\alpha_{X\cap\Sigma, X} \in \WC(X)$ by Proposition \ref{p:bilip-alpha-wcc}, the result follows.
\end{proof}
It now follows easily that a weak Carleson condition for $\alpha$ is equivalent to uniform rectifiability for Ahlfors regular metric spaces.
\begin{proof}[Proof of Theorem \ref{t:metric-UR-alpha}]
    The forward direction is exactly the statement of Proposition \ref{p:alpha-carleson}. For the backward direction, notice that Lemma \ref{l:alpha-bilat} immediately gives a weak Carleson condition for $\xi$. This is exactly the BWGL, which is equivalent to uniform rectifiability by Theorem \ref{t:BHS-BWGL}.
\end{proof}

\bibliographystyle{alpha}
\bibliography{bib-file-RST}
\end{document}